\title{Positive steady states of evolution equations with finite dimensional nonlinearities}
\author{\`{A}ngel Calsina\thanks{Department of Mathematics, Universitat Aut\`{o}noma de Barcelona,  Bellaterra, 08193,  Spain 
({\tt acalsina@mat.uab.cat}).}
        \and J\'{o}zsef Z. Farkas\thanks{Division of Computing Science and Mathematics,
     University of Stirling, Stirling, FK9 4LA, Scotland ({\tt jozsef.farkas@stir.ac.uk}).}}
\begin{document}

\maketitle

\begin{abstract}
We study the question of existence of positive steady states of nonlinear evolution equations. We recast the steady state equation in the form of eigenvalue problems for a parametrised family of unbounded linear operators, which are generators of strongly continuous semigroups; and a fixed point problem. In case of irreducible governing semigroups we consider evolution equations with non-monotone nonlinearities of dimension two, and we establish a new fixed point theorem for set-valued maps. In case of reducible governing semigroups we establish results for monotone nonlinearities of any finite dimension $n$. In addition, we establish a non-quasinilpotency result for a class of strictly positive operators, which are neither irreducible nor compact, in general. We illustrate our theoretical results with examples of partial differential equations arising in structured population dynamics. In particular, we establish existence of positive steady states of a size-structured juvenile-adult and a structured consumer-resource population model, as well as for a selection-mutation model with distributed recruitment process.
\end{abstract}

\begin{keywords} 
Nonlinear evolution equations, positive steady states, fixed points of multivalued maps, semigroups of operators, spectral theory of positive operators, structured populations.
\end{keywords}

\begin{AMS}
47A10, 26E25, 35L02, 92D25
\end{AMS}

\pagestyle{myheadings}
\thispagestyle{plain}
\markboth{\`{A} CALSINA AND J. Z. FARKAS}{POSITIVE STEADY STATES OF EVOLUTION EQUATIONS}

\section{Introduction of the problem}
One of the fundamental questions in the qualitative theory of nonlinear evolution equations is the existence of non-trivial  steady states (time independent solutions). A point of case are partial differential equations arising in structured population dynamics. One naturally expects that under some biologically reasonable conditions a population dynamics model admits at least one (or often exactly one) non-trivial steady state. The same may apply to evolution equations describing physical processes. 
In this work our aim is to establish existence of positive steady states for a class of nonlinear evolution equations. We are motivated mainly by studying partial differential equations arising in structured population dynamics, however, the results we establish here are quite general and they are applicable to a wide range of problems.

In previous studies, researchers used a bifurcation theoretic framework to establish existence of local and global branches of positive equilibria of age-structured population models. In \cite{C85,C85-2} Cushing used a net reproduction number, which is basically a density dependent net reproduction function evaluated at the extinction steady state, as a bifurcation parameter to establish existence of positive steady states. This approach is biologically inspired. 
More recently, in \cite{W1,W2,W3} Walker used bifurcation theory to establish existence of local and global branches of positive steady states of nonlinear age-structured population dynamical models with diffusion. We do not follow their approach here, instead we employ some completely dif\-fe\-rent ideas. The method we develop here is based on the reformulation of the steady state problem as an abstract eigenvalue problem for a family of unbounded operators coupled with a fixed-point problem for an appropriately defined nonlinear map. This approach was recently applied for some models which we were able to treat in a relatively straightforward fashion, see \cite{CS,CF,CP,FGH}. 
In the present paper we develop a general framework which can be applied to a wide class of models. The development of the general framework  requires to establish some new fixed point and spectral theoretic results.  

Let $\mathcal{X},\mathcal{Y}$ be Banach lattices, and consider the abstract Cauchy problem:
\begin{equation}\label{problem}
\frac{d u}{d t}=\mathcal{A}_{\bf u}\,u,\quad u(0)=u_0,
\end{equation}
where for every ${\bf u}\in \mathcal{Y}$, $\mathcal{A}_{\bf u}$ is a linear operator with an appropriately defined domain
$D(\mathcal{A}_{\bf u})\subseteq\mathcal{X}$. The relationship between ${\bf u}\in\mathcal{Y}$-parameter space, and $u\in\mathcal{X}$-state space, is determined by the environmental operator: 
\begin{equation*}
E\,:\,\mathcal{X}\to\mathcal{Y},\quad E(u)={\bf u}.
\end{equation*} 
This latter condition we call the environmental condition, or feedback. If the range of $E$ is contained in $\mathbb{R}^n$ for some $n\in\mathbb{N}$ then we say that problem \eqref{problem} incorporates a (finite)
$n$-dimensional nonlinearity. In the concrete applications it is often the case that $E$ is a positive, linear, and bounded functional, for example an integral operator, such as in the case of the partial differential equations we will discuss in Section 4. The advantage of the formulation \eqref{problem} above is that the (positive) steady state problem can be cast in the simple form:
\begin{equation}\label{problemss}
\frac{d u}{d t}=0=\mathcal{A}_{\bf u}\, u,\quad E(u)={\bf u};\quad 0\ne u\in\mathcal{X}_+.
\end{equation}
Recently we discussed some problems with one-dimensional nonlinearities, i.e. when the range of $E$ is contained in $\mathbb{R}$, see \cite{CS,FGH}. In general, if $\mathcal{A}_{\bf u}$ is a generator of a strongly continuous semigroup of bounded linear operators for every ${\bf u}\in\mathcal{Y}$, then it is enough to guarantee that the operator $\mathcal{A}_{\bf u}$ has eigenvalue zero with  at least one corresponding positive eigenvector. Note that in case of one-dimensional nonlinearities the environmental condition
$E(u)={\bf u}\in\mathbb{R}$ (at the steady state) is automatically satisfied (at least for a linear operator $E$), since the eigenvector(s) corresponding to the zero eigenvalue is only determined up to a constant. 

For a large class of nonlinear  evolution equations, for example those arising in structured population dynamics, the semigroup generated by $\mathcal{A}_{\bf u}$ often has desirable properties. In particular, it is positive, and even irreducible under some natural conditions; and it is often eventually compact. There are a number of results characterising spectral properties of irreducible and compact operators, see e.g. \cite{AGG,Sch}.
In case of multidimensional non\-li\-ne\-arities, problem \eqref{problemss} can be treated, at least in some special cases, by combining the analysis of spectral properties of the linear operator $\mathcal{A}_{\bf u}$ with a  fixed point argument.
This is the case when the spectral bound function, ${\bf u}\to s(\mathcal{A}_{\bf u})$ is monotone along positive rays in $\mathcal{Y}$, and the semigroup generated by $\mathcal{A}_{\bf u}$ is irreducible. This approach was employed recently in
\cite{CF,CP} for models with infinite dimensional nonlinearities. Unfortunately, in case of several real world applications, the spectral bound function  is not monotone along positive rays. In this case we can still utilise spectral properties of the operator $\mathcal{A}_{\bf u}$ but we need to find a fixed point of a multivalued map. Fixed point results for multivalued maps available in the literature require, amongst other things, convexity (essentially) of the range of the maps (see e.g. \cite{GD}), which does not hold in our case, in general. It is our aim in this paper to establish some fixed point results for multivalued maps, which are new as far as we know; and to develop a general framework to treat the steady state problem \eqref{problemss} for models with finite dimensional nonlinearities. 

Further difficulties arise, when the semigroup generated by $\mathcal{A}_{\bf u}$ is not irreducible. In this case, first of all one has to prove that the spectrum of $\mathcal{A}_{\bf u}$ contains an eigenvalue (i.e. the semigroup is not quasi-nilpotent). We will establish at the end of Section 2 a quite general result, which guarantees non-quasinilpotency of the semigroup generated by $\mathcal{A}_{\bf u}$, i.e. that its spectrum is not empty.  On the other hand, the geometric multiplicity of the spectral bound may be greater than one in general, and we need to assure the existence of a fixed point of a set-valued map. In Section 2 we will establish some results which provide treatment to this problem at least for some models with finite dimensional nonlinearities. In Section 3 we reformulate the steady state problem via a fixed point problem defined on the state space (rather than on the parameter space). We briefly describe the advantages of this formulation.
In Section 4 we apply the general results developed in Section 2 to some partial differential equations. 
In particular, we consider some well-known models of physiologically structured populations, such as a size-structured juvenile-adult, and a structured consumer-resource model. We also establish existence of a positive steady state of a selection-mutation model with distributed recruitment process.

\section{General results}

In this section we establish some abstract results which provide a general
framework for treating the steady state problem
\eqref{problemss} for models with finite di\-men\-sional nonlinearities.
First we will consider the case when $\mathcal{A}_{\bf
u}$ generates a positive irreducible semigroup for every ${\bf
u}\in\mathbb{R}^2_+$, but the model ingredients are such that they give rise to 
nonlinearities such that the spectral bound function ${\bf u}\to s(\mathcal{A}_{\bf u})$ is not monotone along positive rays. 

\subsection{$\mathcal{A}_{\bf u}$ generates an irreducible semigroup}

Let $\Sigma=\{(x,y)\in\mathbb{R}^2_+\,|\,
x+y=1\}$, $S$ be any subset of $\mathbb{R}^2_+$ and
$\pi\,:\,S\to\Sigma$  be the projection along positive rays. Note
that $\pi^{-1}\,:\, \Sigma\multimap S$ is multivalued, in general.
\begin{lemma}\label{fixedpointirred}
Let $S\subset\mathbb{R}^2_+$ be connected, such that
$S\cap\{(0,y)\,|\, y\in (0,R)\}\ne\emptyset$, and $S\cap\{(x,0)\,|\,
x\in (0,R)\}\ne\emptyset$ for some $R > 0$. Let $G\,:\, S\to\Sigma$
be continuous. Then the multivalued map
$H=\pi^{-1}\circ G\,:S\multimap S$ has at least one fixed
point, i.e. there exists $s\in S$ such that $s\in
\pi^{-1}(G(s)).$
\end{lemma}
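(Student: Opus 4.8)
The plan is to recognize that, despite the multivalued dressing, the fixed point condition $s \in \pi^{-1}(G(s))$ is equivalent to the \emph{single-valued} scalar equation $\pi(s) = G(s)$, and then to solve that equation by an intermediate value argument exploiting the connectedness of $S$. Indeed, by the definition of $\pi^{-1}$, a point $s \in S$ satisfies $s \in \pi^{-1}(G(s))$ if and only if $\pi(s) = G(s)$; so it suffices to produce a single $s \in S$ at which the projection of $s$ onto $\Sigma$ coincides with $G(s)$.

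First I would reduce the target $\Sigma$ to a scalar. Since $\Sigma = \{(x,1-x) : x \in [0,1]\}$ is homeomorphic to $[0,1]$ via the first coordinate, write $\pi(s) = (\pi_1(s), 1-\pi_1(s))$ and $G(s) = (G_1(s), 1 - G_1(s))$, and define $h : S \to [-1,1]$ by $h(s) = \pi_1(s) - G_1(s)$. Because both coordinates of a point of $\Sigma$ are determined by the first, the equation $\pi(s) = G(s)$ holds precisely when $h(s) = 0$. The map $\pi$ is given explicitly by $\pi(s) = s/(s_1 + s_2)$, and since $\pi$ is assumed to be a well-defined map on all of $S$ the origin must be excluded, whence $s_1 + s_2 > 0$ throughout $S$; therefore $\pi$, and hence $h$, is continuous.

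Next I would evaluate $h$ at the two points supplied by the hypotheses. Pick $a = (0, y_a) \in S$ with $y_a \in (0,R)$; then $\pi(a) = (0,1)$, so $\pi_1(a) = 0$ and $h(a) = -G_1(a) \le 0$, since $G_1(a) \in [0,1]$. Pick $b = (x_b, 0) \in S$ with $x_b \in (0,R)$; then $\pi(b) = (1,0)$, so $\pi_1(b) = 1$ and $h(b) = 1 - G_1(b) \ge 0$. Thus $h$ takes a nonpositive value at $a$ and a nonnegative value at $b$.

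Finally, the conclusion follows from connectedness: as $S$ is connected and $h$ is continuous, the image $h(S)$ is a connected subset of $\mathbb{R}$, i.e.\ an interval, containing both $h(a) \le 0$ and $h(b) \ge 0$; hence $0 \in h(S)$, and any $s \in S$ with $h(s) = 0$ satisfies $\pi(s) = G(s)$, i.e.\ is the desired fixed point. I do not expect a genuinely hard step here: the only points requiring care are the reformulation of the multivalued fixed point condition as the scalar equation $h = 0$ and the verification that $h$ is continuous (equivalently, that $S$ avoids the origin), after which the result is simply the intermediate value theorem for connected domains. The essential feature being exploited is that $\Sigma$ is one-dimensional, which is precisely the reason the argument is confined to two-dimensional nonlinearities.
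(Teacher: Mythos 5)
Your proof is correct and is essentially the same argument as the paper's: both reduce the multivalued fixed point condition to the single-valued equation $\pi(s)=G(s)$ and then exploit connectedness of $S$ via an intermediate-value argument (the paper phrases this as a disconnection of the graph $h(S)=\{(\pi(s),G(s))\}\subset\Sigma\times\Sigma$ by the diagonal, which is the same idea as your scalar function $h=\pi_1-G_1$ changing sign). Your scalar reduction is a slightly cleaner packaging, but there is no substantive difference.
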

\begin{proof} We define the set
\begin{equation}
C=\{(x,y)\in\Sigma\times\Sigma\,|\, x\in\Sigma,\,\, y\in G(\pi^{-1}(x))\}.
\end{equation}
First notice that $C$ is a connected set. Indeed,
$C=h(S)$ where $h\,:\,S\to\Sigma\times\Sigma$ defined as
$h(s)=(\pi(s),G(s))$ for $s\in S$, is a continuous function, since
both of its components are continuous. Next we note that there
exists a $y_1\in (0,R)$ such that $(0,y_1)\in S$
and $h(0,y_1)=(\pi((0,y_1)),G((0,y_1)))=((0,1),G((0,y_1)))$
therefore
$h(S)\cap(\{(0,1)\}\times\Sigma)\ne\emptyset$.
Similarly, there exists an $x_1\in (0,R)$ such that
$(x_1,0)\in S$ and
$h(x_1,0)=(\pi((x_1,0)),G((x_1,0)))=((1,0),G((x_1,0)))$
hence
$h(S)\cap(\{(1,0)\}\times\Sigma)\ne\emptyset$. It
follows that there exists a $\lambda\in [0,1]$ such
that $((\lambda,1-\lambda),(\lambda,1-\lambda))\in h(S)$ since
otherwise, $h(S)$ decomposes in the union of two disjoint non-empty
relatively open sets, namely,
$A:=\{((\lambda,1-\lambda),(\mu,1-\mu))\in h(S):\lambda
>\mu\}$ and $h(S) \setminus A$. Hence there exists an $s\in S$ such that
$\pi(s)=(\lambda,1-\lambda)=G(s)$, that is $s\in \pi^{-1}(G(s))$, and the proof is completed.
\qquad\end{proof}

{\sc Remark}
Note that even in the case of a multivalued function $G\,:S\multimap \Sigma$, the existence of a fixed point of the multivalued map $\pi^{-1}\circ G$ is equivalent to the existence 
of a fixed point of the multivalued map $G\circ\pi^{-1}$, where the composed maps are indicated in the following diagram: 
\begin{equation}
S\overset{G}{\multimap}\Sigma\overset{\pi^{-1}}{\multimap}S\overset{G}{\multimap}\Sigma.
\end{equation}

\begin{lemma}\label{levelset}
Let $f\,:\,\mathbb{R}^2_+\to\mathbb{R}$ be a continuous function, and assume that $f(0,0)>0$, and that there exists an $R>0$ such that $f(x,y)<0$ for $x^2+y^2>R^2$. Then the set
\begin{equation}
Z=\left\{{\bf z}\in\mathbb{R}^2_+\,|\, f({\bf z})=0\right\}
\end{equation}
has a compact connected subset $S$, which intersects the sets 
\begin{equation*}
\{(0,y)\,|\,r\le y\le R\},\,\, \text{and}\,\,\{(x,0)\,|\, r\le x\le R\},
\end{equation*} 
for some $r>0$.
\end{lemma}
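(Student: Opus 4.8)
The plan is to recast the statement as a purely topological separation problem for the zero set $Z$ and then extract the required connected component. First I would record the elementary preliminaries. Since $f(0,0)>0$ and $f$ is continuous, there is an $r>0$ with $f>0$ on the closed quarter-disc $\overline{B_r}\cap\mathbb{R}^2_+$; this $r$ will be the lower bound appearing in the statement. Since $f<0$ whenever $x^2+y^2>R^2$, the set $Z$ is contained in $\overline{B_R}\cap\mathbb{R}^2_+$ and is closed, hence compact. The intermediate value theorem applied along each axis (where $f$ passes from $f(0,0)>0$ to negative values beyond radius $R$) shows that the closed sets $A:=Z\cap\{x=0\}$ and $B:=Z\cap\{y=0\}$ are nonempty; they are disjoint because $f(0,0)>0$ forces $(0,0)\notin Z$. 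The goal then reduces to producing a connected component $S$ of $Z$ meeting both $A$ and $B$: any such $S$ is automatically compact, connected, contained in $\overline{B_R}$, and meets $\{x=0\}$ and $\{y=0\}$ at points whose distance from the origin lies in $(r,R]$, which is exactly the assertion.

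To encode the geometry I would one-point compactify, writing $\hat Q=\mathbb{R}^2_+\cup\{\infty\}$, which is homeomorphic to a closed disc whose boundary circle is the union of the two coordinate arcs $\alpha=\{(x,0):x>0\}$ and $\beta=\{(0,y):y>0\}$ together with the two dividing points $O=(0,0)$ and $\infty$. The crucial observation is that $Z$ separates $O$ from $\infty$ in $\hat Q$: the complement $\hat Q\setminus Z$ is covered by the two disjoint relatively open sets $\{f>0\}$ and $\{f<0\}\cup\{\infty\}$, the latter being open since $(\{x^2+y^2>R^2\}\cap\mathbb{R}^2_+)\cup\{\infty\}$ is a neighbourhood of $\infty$ on which $f<0$; hence no connected subset of the complement contains both $O\in\{f>0\}$ and $\infty$.

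The heart of the argument is then twofold. In the compact space $Z$, either some connected component meets both $A$ and $B$, in which case we are done; or, by the standard separation lemma for compact spaces, $Z$ splits as a disjoint union $Z=Z_A\sqcup Z_B$ of relatively clopen compact sets with $A\subseteq Z_A$ and $B\subseteq Z_B$. To rule out the second alternative I would transfer the separation to the sphere by doubling: reflecting $\hat Q$ across its boundary circle realizes $S^2$, and a folding/path argument shows that a closed set $C\subseteq\hat Q$ separates $O$ from $\infty$ in $\hat Q$ if and only if its mirror double $C\cup\rho(C)$ separates them in $S^2$, where $\rho$ is the reflection involution. Applying this, the doubles $\tilde Z_A,\tilde Z_B$ are disjoint and their union separates $O$ from $\infty$ in $S^2$, so Janiszewski's theorem forces one of them, say $\tilde Z_A$, to separate $O$ from $\infty$ already; hence $Z_A$ separates them in $\hat Q$. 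Finally, a separating set must meet both boundary arcs: if $Z_A$ missed $\alpha$, then $\overline\alpha$ would be a connected subset of $\hat Q\setminus Z_A$ joining $O$ to $\infty$, a contradiction. Thus $Z_A$ contains a point of $\alpha$, i.e. an $x$-axis zero; but every $x$-axis zero lies in $B\subseteq Z_B$, contradicting $Z_A\cap Z_B=\emptyset$. Therefore the first alternative holds and the desired $S$ exists.

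I expect the main obstacle to be the planar-topology input: pinning down the separation lemma for compact spaces together with Janiszewski's theorem, and correctly transferring the latter from $S^2$ to the compactified quadrant by the doubling device, in particular checking that the fold map is continuous and carries paths avoiding the double $C\cup\rho(C)$ to paths in $\hat Q$ avoiding $C$. The remaining arc argument and the intermediate-value and compactness preliminaries are routine by comparison.
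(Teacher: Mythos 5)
Your proof is correct, but it takes a genuinely different route from the paper's. The paper encloses $Z$ in the square $[0,R+1]^2$, orients its boundary so that the two ``negative'' pieces are the corner arc near the origin (where $f>0$) and the far arc (where $f\le 0$), notes that every path in the square joining these two pieces must cross $Z$, and then simply cites the path-crossing lemma of Pascoletti and Zanolin (Lemma 2.9 in \cite{PZ}) to extract a compact connected subset of $Z$ joining the two axis segments. You instead prove the continuum-crossing fact directly: you compactify the quadrant to a disc $\hat Q$, observe that $Z$ separates the origin from $\infty$ because its complement is covered by the disjoint open sets $\{f>0\}$ and $\{f<0\}\cup\{\infty\}$, and run the dichotomy ``either some component of $Z$ meets both axes, or $Z$ splits into disjoint compacta $Z_A\sqcup Z_B$'' via the standard coincidence of components and quasi-components in compact Hausdorff spaces; the doubling to $S^2$ together with Janiszewski's theorem then forces one of $Z_A$, $Z_B$ to separate on its own and hence to meet the wrong coordinate axis, a contradiction. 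The two arguments are of comparable depth---the path-crossing lemma and Janiszewski's theorem are essentially interchangeable pieces of plane topology---so the paper's version buys brevity by outsourcing the hard step to \cite{PZ}, while yours is self-contained modulo two classical theorems and makes the underlying separation mechanism explicit. The steps you flag as delicate (continuity of the fold map, the equivalence of separation in $\hat Q$ and in $S^2$, disjointness of the mirror doubles, and the applicability of Janiszewski's theorem to disjoint closed sets in $S^2$) all check out, as does your choice of $r$ ensuring the axis intersections land in the prescribed segments.
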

\begin{proof}
We use a path crossing lemma (Lemma 2.9 in
\cite{PZ}) to show that the level set $Z$ has a compact connected subset $S$, which intersects the
sets: 
\begin{equation*}
\{(0,y)\,|\,r\le y\le R\},\,\, \text{and} \,\,\{(x,0)\,|\, r\le x\le R\},
\end{equation*} 
for some $r>0$ small enough. Let
$Q=[0,R+1]^2$ a rectangle, and let
\begin{align*}
F_l^-=& \{(x,0)\,|\, 0\le x\le r\}\cup\{(0,y)\,|\, 0\le y\le r\}, \\
F_r^-=& \{(x,0)\,|\, R\le x\le R+1\}\cup\{(0,y)\,|\, R\le y\le R+1\} \\
& \cup\{(x,R+1)\,|\, 0\le x\le R+1\}\cup\{(R+1,y)\,|\, 0\le y\le R+1\}, \\
F_b^+=& \{(x,0)\,|\, r\le x\le R\}, \\
F_t^+=& \{(0,y)\,|\,r\le y\le R\}.
\end{align*}
Then $\widetilde{Q}=(Q,Q^-)$ is an oriented rectangle according to
Definition 8 in \cite{PZ}, where $Q^-=F_l^-\cup F_r^-$, and clearly
$F_l^-$ and $F_r^-$ are disjoint. We note that $Z\subset Q$, and for
every path $\gamma\subset Q$ connecting $F_l^-$ and $F_r^-$ we have
$Z\cap\gamma\ne \emptyset$. It follows from Lemma 2.9 in \cite{PZ}
that there exists a compact connected set $S\subset
Z$ which intersects both $F_b^+$ and $F_t^+$. 
\qquad\end{proof}

In order to deal with continuous perturbations of
linear operators we will use the notion of generalised convergence, see \cite[Chapter 4.2]{Kato}. If $T$ and $S$ are closed linear operators from the Banach space $\mathcal{X}$ into itself,
then one can consider the graphs $G(T)$ and $G(S)$, which are closed linear manifolds
in the pro\-duct topology. The distance between two closed linear manifolds $M$ and $N$ is defined
as 
\begin{equation*}
\hat{d}(M,N)=\max\{d(M,N),d(N,M)\},
\end{equation*} 
where 
\begin{equation*}
d(M,N)=\displaystyle\sup_{u\in S_M}\displaystyle\inf_{v\in S_N}||u-v||.
\end{equation*} 
Above $S_N$ and $S_M$ denote the unit sphere of $N$ and $M$, respectively. 
We can define now the distance between $T$ and $S$ as 
\begin{equation*}
\bar{d}(T,S)=\hat{d}(G(T),G(S)).
\end{equation*} 
With this distance function the space $\mathcal{C(X)}$ of closed linear operators becomes a metric space, and convergence of a sequence $T_n$ is defined by $\bar{d}(T_n,T)\to 0$. We recall a criterion of generalised convergence for a sequence of operators. In particular, the following result is Theorem 2.25 from \cite[Chapter IV]{Kato}.
\begin{theorem}\label{genconv}
Let $T$ be a closed linear operator from the Banach space $\mathcal{X}$ into itself such that its resolvent set, denoted by 
$P(T)$, is not empty. The sequence of closed linear operators $T_n$ converges to $T$ in the generalised sense, if 
each $\lambda\in P(T)$ belongs to $P(T_n)$ for sufficiently large $n$, and: 
\begin{equation*}
||R(\lambda,T)-R(\lambda,T_n)||\to 0.
\end{equation*}
\end{theorem}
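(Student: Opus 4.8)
The plan is to reduce the statement to the much easier case of \emph{bounded} operators by passing to resolvents, and then to transport the resulting convergence back to the graphs of $T$ and $T_n$ through a single bounded invertible transformation of the product space $\mathcal{X}\times\mathcal{X}$. Since $P(T)\ne\emptyset$, fix any $\lambda\in P(T)$; by hypothesis $\lambda\in P(T_n)$ for all large $n$, so that $R(\lambda,T)$ and $R(\lambda,T_n)$ are everywhere-defined bounded operators. First I would record the algebraic identity linking the graph of an operator to the graph of its resolvent. Writing $u=R(\lambda,T)v$ for $v\in\mathcal{X}$ we have $Tu=\lambda u-v$, whence
\begin{equation*}
G(T)=\{(R(\lambda,T)v,\ \lambda R(\lambda,T)v-v)\,:\,v\in\mathcal{X}\}=W_\lambda\bigl(G(R(\lambda,T))\bigr),
\end{equation*}
where $W_\lambda:\mathcal{X}\times\mathcal{X}\to\mathcal{X}\times\mathcal{X}$ is the operator $W_\lambda(v,w)=(w,\lambda w-v)$. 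This $W_\lambda$ is bounded with bounded inverse $W_\lambda^{-1}(a,b)=(\lambda a-b,a)$, and, crucially, it depends only on $\lambda$ and not on the operator. The same identity gives $G(T_n)=W_\lambda\bigl(G(R(\lambda,T_n))\bigr)$ for all large $n$.

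Next I would dispose of the bounded case. For bounded operators $A,B$ on $\mathcal{X}$ the graph $G(A)=\{(x,Ax):x\in\mathcal{X}\}$ is a closed subspace, and a direct estimate shows that the gap is controlled by the operator norm: given a unit vector $u=(x,Ax)\in S_{G(A)}$ one compares it with $(x,Bx)\in G(B)$, at distance $\|Ax-Bx\|\le\|A-B\|$, and after renormalising to the unit sphere $S_{G(B)}$ one obtains $\hat{d}(G(A),G(B))\le C\|A-B\|$ for an absolute constant $C$. Applying this with $A=R(\lambda,T_n)$ and $B=R(\lambda,T)$, and invoking the hypothesis $\|R(\lambda,T)-R(\lambda,T_n)\|\to 0$, yields
\begin{equation*}
\hat{d}\bigl(G(R(\lambda,T_n)),\ G(R(\lambda,T))\bigr)\to 0.
\end{equation*}

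Finally I would push this convergence through $W_\lambda$. The key technical input is a perturbation estimate for the gap under a bounded bijection of the product space: since $W_\lambda$ and $W_\lambda^{-1}$ are both bounded, one has $\hat{d}(W_\lambda M,W_\lambda N)\le\kappa(W_\lambda)\,\hat{d}(M,N)$ for closed subspaces $M=G(R(\lambda,T_n))$ and $N=G(R(\lambda,T))$, with a constant $\kappa(W_\lambda)$ depending only on $\|W_\lambda\|$ and $\|W_\lambda^{-1}\|$. Combining this with the previous step gives
\begin{equation*}
\bar{d}(T_n,T)=\hat{d}\bigl(G(T_n),G(T)\bigr)=\hat{d}\bigl(W_\lambda G(R(\lambda,T_n)),\ W_\lambda G(R(\lambda,T))\bigr)\to 0,
\end{equation*}
which is precisely generalised convergence $T_n\to T$.

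I expect the main obstacle to be this last estimate on how the gap metric is distorted by $W_\lambda$. The difficulty lies entirely in the unit-sphere normalisation built into $d(M,N)=\sup_{u\in S_M}\inf_{v\in S_N}\|u-v\|$: a bijection does not send unit vectors to unit vectors, so one must carefully re-scale the images $W_\lambda u$ and keep track of the resulting factors of $\|W_\lambda\|$ and $\|W_\lambda^{-1}\|$ while bounding $\inf_v\|W_\lambda u-W_\lambda v\|$ from above. This is exactly the content of the gap-perturbation lemmas in \cite[Chapter IV, \S2]{Kato}; once they are in hand, the remainder of the argument is the elementary resolvent algebra recorded above.
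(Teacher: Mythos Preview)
Your argument is correct, but you should be aware that the paper does not actually prove this theorem: it is merely quoted as Theorem~2.25 from \cite[Chapter~IV]{Kato} and used as a black box. So there is nothing to compare your proof against in the paper itself.

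That said, the route you outline---rewriting $G(T)=W_\lambda\bigl(G(R(\lambda,T))\bigr)$ via the bounded bijection $W_\lambda(v,w)=(w,\lambda w-v)$, reducing to gap-convergence of the bounded resolvents, and then invoking the stability of the gap metric under bounded invertible maps---is precisely Kato's own argument in \cite[IV.2.4--IV.2.6 and Theorem~IV.2.25]{Kato}. Your identification of the only delicate point, the distortion estimate $\hat d(W_\lambda M,W_\lambda N)\le\kappa(W_\lambda)\,\hat d(M,N)$, is accurate; this is supplied by \cite[IV, (2.15)--(2.16)]{Kato}, where the constant is explicitly $\kappa(W_\lambda)=\|W_\lambda\|\,\|W_\lambda^{-1}\|$ (up to a factor of~$2$ depending on which of Kato's equivalent gap functions one uses). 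Once that lemma is granted, the rest is indeed straightforward resolvent algebra, exactly as you say.
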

We will use this result later on in the Appendix. In the sequel of the proof of Theorem \ref{irreducible} below we will also use 
a fundamental result found in \cite[Chapter IV]{Kato}, concerning the continuity of a finite system of eigenvalues of 
closed linear operators. In particular, the continuity of the spectral bound function is a consequence of the following theorem, which is Theorem 3.16 in \cite[Chapter IV]{Kato}. 
\begin{theorem}\label{spectrumcont}
Let $T$ be a closed linear operator, and let the spectrum of $T$, denoted by $\sigma(T)$ separated into two parts $\sigma'(T),\, \sigma''(T)$ by a rectifiable, simple, closed curve $\Gamma$. Let $\mathcal{X}=M'(T)\oplus M''(T)$ be the associated spectral decomposition of $\mathcal{X}$. Then there exists an $\varepsilon>0$, depending on $T$ and $\Gamma$, with the following properties. Any closed linear operator $S$ with $\bar{d}(S,T)<\varepsilon$ has spectrum $\sigma(S)$ likewise separated by $\Gamma$ into two parts: $\sigma'(S)$ and $\sigma''(S)$. In the associated spectral decomposition $\mathcal{X}=M'(S)\oplus M''(S)$, $M'(S)$ and $M''(S)$ are isomorphic with $M'(T)$ and $M''(T)$, respectively. In particular, $\dim M'(S)=\dim M'(T)$, $\dim M''(S)=\dim M''(T)$ and both $\sigma'(S)$ and $\sigma''(S)$ are non-empty if this is true for $T$. The decomposition  $\mathcal{X}=M'(S)\oplus M''(S)$ is continuous in $S$ in the sense that the projection $P[S]$ of $\mathcal{X}$ onto $M'(S)$ along $M''(S)$ tends to $P[T]$ in norm as $\bar{d}(S,T)\to 0$.
\end{theorem}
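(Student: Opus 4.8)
The plan is to reduce every assertion of the theorem to the behaviour of the \emph{Riesz spectral projection} attached to the separating curve $\Gamma$, namely
\begin{equation*}
P[T]=\frac{1}{2\pi i}\oint_\Gamma R(\lambda,T)\,d\lambda,
\end{equation*}
which is well defined because $\Gamma$ avoids $\sigma(T)$, so that $R(\lambda,T)$ is holomorphic, hence continuous and bounded, on the compact set $\Gamma$. This projection commutes with $T$, its range is $M'(T)$ and its kernel is $M''(T)$, while $\sigma'(T)$ (resp. $\sigma''(T)$) is exactly the part of $\sigma(T)$ enclosed by (resp. lying outside) $\Gamma$. Defining $P[S]$ by the same contour integral for any closed $S$ with $\bar d(S,T)$ small, I would prove the theorem in three steps: (i) the resolvents converge uniformly on $\Gamma$; (ii) consequently $P[S]\to P[T]$ in norm; (iii) closeness of the two projections forces the associated subspaces to be isomorphic and of equal dimension, whence the statements on $\dim M'$, $\dim M''$ and on non-emptiness follow.

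The first step is where the real work lies, and it is the main obstacle. I would show that if $\lambda\in P(T)$ then $\lambda\in P(S)$ for $\bar d(S,T)$ small, together with
\begin{equation*}
\|R(\lambda,S)-R(\lambda,T)\|\longrightarrow 0 .
\end{equation*}
Theorem \ref{genconv} records the \emph{converse} implication, that such resolvent convergence yields generalised convergence, so what is needed here is the reverse stability estimate, which must be derived directly from the gap metric $\bar d(S,T)=\hat d(G(T),G(S))$. The estimate relates the gap between the graphs to the difference of the resolvents through the auxiliary quantities $\|R(\lambda,T)\|$ and $\|TR(\lambda,T)\|=\|\lambda R(\lambda,T)-I\|$. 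Both are continuous in $\lambda$, and $\Gamma$ is compact, so each attains a finite maximum there; hence a single smallness condition on $\bar d(S,T)$ yields, uniformly in $\lambda\in\Gamma$, both the membership $\lambda\in P(S)$ and a bound on $\|R(\lambda,S)-R(\lambda,T)\|$ that tends to $0$ as $\bar d(S,T)\to 0$. This is the genuinely technical part, since the gap controls the operators only through their graphs and one must convert this into a Neumann-type perturbation series for the resolvent before integrating over $\Gamma$.

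Granting the uniform resolvent estimate, the remaining steps are routine. For step (ii), I estimate the contour integral directly,
\begin{equation*}
\|P[S]-P[T]\|\le\frac{\mathrm{length}(\Gamma)}{2\pi}\,\sup_{\lambda\in\Gamma}\|R(\lambda,S)-R(\lambda,T)\|\longrightarrow 0,
\end{equation*}
which at the same time shows that no spectrum of $S$ meets $\Gamma$, so that $\sigma(S)$ is split by $\Gamma$ into $\sigma'(S)$ and $\sigma''(S)$, and gives the asserted norm continuity $P[S]\to P[T]$. For step (iii), once $\bar d(S,T)$ is small enough that $\|P[S]-P[T]\|<1$, I would invoke the standard fact that two projections at distance less than $1$ are similar: the operator
\begin{equation*}
U=P[S]\,P[T]+(I-P[S])(I-P[T])
\end{equation*}
is invertible and satisfies $P[S]\,U=U\,P[T]$, so it restricts to isomorphisms $M'(T)\to M'(S)$ and $M''(T)\to M''(S)$. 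This gives $\dim M'(S)=\dim M'(T)$ and $\dim M''(S)=\dim M''(T)$. Finally, a spectral subspace is nonzero precisely when the associated part of the spectrum is non-empty, so equality of dimensions transfers non-emptiness of $\sigma'(T)$ and $\sigma''(T)$ to $\sigma'(S)$ and $\sigma''(S)$, completing the argument.
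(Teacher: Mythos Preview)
Your proposal is essentially the standard argument and is correct in outline, but there is nothing to compare it against: the paper does not prove this theorem at all. Theorem~\ref{spectrumcont} is simply quoted verbatim from Kato's book (it is stated as ``Theorem 3.16 in \cite[Chapter IV]{Kato}'') and used as a black box in the subsequent arguments. No proof, sketch, or indication of method is given in the paper itself.

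For what it is worth, the three-step plan you describe---uniform resolvent convergence on the compact curve $\Gamma$, norm convergence of the Riesz projections via the contour integral, and then the similarity of nearby projections through the intertwiner $U=P[S]P[T]+(I-P[S])(I-P[T])$---is exactly Kato's own line of proof. Your identification of step~(i) as the substantive part is accurate: converting smallness of the graph gap $\bar d(S,T)$ into a uniform-in-$\lambda$ bound on $\|R(\lambda,S)-R(\lambda,T)\|$ over $\Gamma$ is the only place where real work is needed, and Kato carries it out through the estimates you allude to involving $\|R(\lambda,T)\|$ and $\|TR(\lambda,T)\|$. The remaining steps are, as you say, routine.
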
 

Our main result is the following.
\begin{theorem}\label{irreducible}
Assume that the linear operator $\mathcal{A}_{\bf
u}$ generates a positive irreducible and eventually compact
semigroup of bounded linear operators for every ${\bf
u}\in\mathbb{R}^2_+$, and that $\bf{u}_n \rightarrow
\bf{u}$ implies $\mathcal{A}_{\bf u_n} \rightarrow \mathcal{A}_{\bf
u}$ in the generalised sense. Moreover, assume that $s(\mathcal{A}_{\bf 0})>0$, and 
there exists an $R>0$ such that $s(\mathcal{A}_{\bf u})<0$ for $||{\bf u}||\ge R$. Further assume
that $E$ is a strictly positive linear functional.
Then the steady state problem \eqref{problemss} has a solution.
\end{theorem}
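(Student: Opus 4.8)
The plan is to reduce the steady state problem \eqref{problemss} to a fixed point problem on the simplex $\Sigma$ and then invoke Lemma \ref{fixedpointirred}. First I would introduce the spectral bound function $f({\bf u}):=s(\mathcal{A}_{\bf u})$ on $\mathbb{R}^2_+$. Because each semigroup is positive, irreducible and eventually compact, the spectral bound $s(\mathcal{A}_{\bf u})$ is a strictly dominant, algebraically simple eigenvalue whose one-dimensional eigenspace is spanned by a strictly positive eigenvector (Perron--Frobenius type theory, cf. \cite{AGG,Sch}). The generalised convergence hypothesis together with Theorem \ref{spectrumcont} then yields continuity of $f$: enclosing $s(\mathcal{A}_{\bf u})$ in a small circle $\Gamma$ separating it from the rest of the spectrum, one gets that for ${\bf u}_n\to{\bf u}$ the spectrum of $\mathcal{A}_{{\bf u}_n}$ is eventually separated by $\Gamma$ with the inside part of constant (one-dimensional) spectral dimension, so $s(\mathcal{A}_{{\bf u}_n})$ is trapped near $s(\mathcal{A}_{\bf u})$. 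Since $f({\bf 0})=s(\mathcal{A}_{\bf 0})>0$ and $f({\bf u})<0$ for $\|{\bf u}\|\ge R$, these are exactly the hypotheses of Lemma \ref{levelset}, which I would apply to obtain a compact connected set $S\subset Z=\{{\bf u}\,|\,s(\mathcal{A}_{\bf u})=0\}$ meeting the segments $\{(0,y)\,|\,r\le y<R\}$ and $\{(x,0)\,|\,r\le x<R\}$ for some $r>0$ (the intersection stays strictly inside radius $R$ since $f<0$ on $\|{\bf u}\|=R$).

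On $S$ every operator $\mathcal{A}_{\bf u}$ has $0$ as its dominant eigenvalue, so there is a unique normalised strictly positive eigenvector $v_{\bf u}$ with $\mathcal{A}_{\bf u}v_{\bf u}=0$. I would then define $G\,:\,S\to\Sigma$ by $G({\bf u})=\pi(E(v_{\bf u}))$, where $\pi$ is the projection along positive rays. This is well defined and independent of the normalisation of $v_{\bf u}$: strict positivity of the linear functional $E$ forces $E(v_{\bf u})$ into the interior of $\mathbb{R}^2_+$, so it projects to a genuine point of $\Sigma$. Applying Lemma \ref{fixedpointirred} to this $G$ produces a point ${\bf u}^*\in S$ with $\pi({\bf u}^*)=G({\bf u}^*)=\pi(E(v_{{\bf u}^*}))$, meaning ${\bf u}^*$ and $E(v_{{\bf u}^*})$ lie on the same positive ray, i.e. $E(v_{{\bf u}^*})=\lambda{\bf u}^*$ for some $\lambda>0$. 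Setting $u^*=\lambda^{-1}v_{{\bf u}^*}$ gives a nonzero element of $\mathcal{X}_+$ with $\mathcal{A}_{{\bf u}^*}u^*=0$ and $E(u^*)={\bf u}^*$, which is precisely a solution of \eqref{problemss}.

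The main obstacle is proving that $G$ is continuous, as required by Lemma \ref{fixedpointirred}; this is delicate because an eigenvector is only defined up to a scalar. I would handle it locally through the spectral projection. By Theorem \ref{spectrumcont} the rank-one projection $P[\mathcal{A}_{\bf u}]$ onto the eigenspace of the dominant eigenvalue tends in norm to $P[\mathcal{A}_{{\bf u}_0}]$ as ${\bf u}\to{\bf u}_0$. Fixing a reference point ${\bf u}_0\in S$ and applying $P[\mathcal{A}_{\bf u}]$ to the strictly positive vector $v_{{\bf u}_0}$ produces, for ${\bf u}$ near ${\bf u}_0$, a nonzero vector $P[\mathcal{A}_{\bf u}]v_{{\bf u}_0}$ which is a scalar multiple of $v_{\bf u}$ and is close in norm to $v_{{\bf u}_0}$; hence that scalar is positive, and after normalisation $v_{\bf u}$ depends continuously on ${\bf u}$ near ${\bf u}_0$. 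Composing with the bounded functional $E$ and the ray-projection $\pi$ then gives local continuity of $G$, and since $G$ does not depend on the chosen reference vector these local descriptions agree, yielding continuity of $G$ on all of $S$ and completing the argument.
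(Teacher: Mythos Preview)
Your proof is correct and follows essentially the same route as the paper: continuity of the spectral bound via Theorem \ref{spectrumcont}, application of Lemma \ref{levelset} to extract the compact connected zero-level set $S$, definition of $G$ as the composite of parameter-to-generator, normalised positive eigenvector, $E$, and ray-projection, and then Lemma \ref{fixedpointirred} to produce the fixed point. The only difference is cosmetic: where the paper simply cites Kato for continuity of the eigenvector map $F$, you spell out the spectral-projection argument explicitly, which is exactly what underlies that citation.
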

\begin{proof}
Theorem \ref{spectrumcont} above shows that generalised convergence implies that the spectral bound function $\sigma\,:\,\mathbb{R}^2_+\to\mathcal{C(X)}\to\mathbb{R}$ is continuous.
Let us define the level set
\begin{equation}\label{levelset2}
Z=\{{\bf z}\in\mathbb{R}^2_+\,|\, \sigma(z)=s(\mathcal{A}_{\bf z})=0\}.
\end{equation}
Lemma \ref{levelset} implies that 
the set $Z$ has a compact connected subset $S$, which intersects the sets
\begin{equation*}
\{(0,y)\,|\,r\le y\le R\},\,\, \text{and}\,\, \{(x,0)\,|\, r\le x\le R\},
\end{equation*} 
for some $r>0$.
We now apply Lemma \ref{fixedpointirred} with the
following setting: $G=\Pi\circ E\circ F\circ P$, where
\begin{equation}
G \,:\,\underbrace{{\bf v}}_{\in
S}\xrightarrow{P}\underbrace{\mathcal{A}_{\bf v}}_{\in
\mathcal{C}(\mathcal{X})}
\overset{F}{\to}\underbrace{u}_{\in \mathcal{X}\setminus \{0\}}
\xrightarrow{E}\underbrace{{\bf u}}_{\in \mathbb{R}^2_+\setminus
\{0\}} \overset{\Pi}{\to}\underbrace{{\bf v}'}_{\in \Sigma}.
\end{equation}

Here $P$ denotes the mapping which sends the
parameter value to the cor\-res\-pon\-ding (closed) generator, $F$ stands
for the map which gives the (uniquely determined) normalised
positive eigenvector corresponding to the spectral bound of the
generator (the spectral bound of the generator of a positive
irreducible and eventually compact semigroup is an algebraically
simple isolated eigenvalue with a corresponding strictly positive
eigenvector, see e.g. \cite[Theorem 9.10 and Theorem 8.17]{CH}), and
$\Pi$ is the projection along rays into the set $\Sigma$.

Note that $G$ is continuous since $P$ is continuous
by hypothesis and $F$ is also continuous (see Theorem 3.16 of Chap.
IV in \cite{Kato}). By Lemma \ref{fixedpointirred} we have a fixed point $s^*$ of the map
$H=\pi^{-1}\circ G$, which yields a unique positive steady
state as follows. There exists an $s^*\in S$
such that $G(s^*)=\left(\Pi\circ E\circ F\circ P\right)(s^*)=\pi(s^*)$. The
steady state is then given by $\lambda(F\circ
P)(s^*)$ for some $\lambda>0$ which has to satisfy
$E(\lambda(F\circ P)(s^*))=\lambda \left(E\circ F\circ P\right)(s^*)=s^*$. But $\left(\Pi\circ E\circ F\circ P\right)(s^*)=\pi(s^*)$ implies $\left(E\circ F\circ P\right)(s^*)\sim s^*$, where $x\sim y$ means that $x=cy$ for some $c>0$. 
\qquad\end{proof}

\subsection{$\mathcal{A}_{\bf u}$ generates a non-irreducible semigroup}

Next we discuss the more delicate case when the semigroup generated by $\mathcal{A}_{\bf u}$ is not irreducible,  
but still eventually compact. First we assume that the spectral bound function is monotone along positive rays. 
This assumption allows us to deal with equations with any finite dimensional nonlinearities, in general. 
Let $S$ be the level set where the spectral bound equals zero, and assume that $S$ intersects every ray in the 
positive cone of $\mathbb{R}^n$. Note that later we will impose different assumptions on the spectral bound 
function which will replace this latter hypothesis. Let us denote the $n$-dimensional 
simplex by 
\begin{equation*}
\Sigma=\{(x_1,\cdots,x_n)^T\in\mathbb{R}^n_+\,\vert\, x_1+\cdots +x_n=1\}.
\end{equation*} 
Notice that the hypothesis on the level set $S$ and the monotonicity of the spectral bound function imply that the projection along rays into $\Sigma$, denoted by $h$, establishes a homeomorphism between $S$ and $\Sigma$.

We define the nonlinear multivalued map $\Phi\,:\, S \multimap S$ as follows:
\begin{equation}\label{Phimap}
\Phi({\bf w})=\left(H\circ E\circ F\circ P\right)({\bf w})\subset S.
\end{equation}
The construction is described briefly on the following diagram:
\begin{equation}\label{Phimap2}
\Phi\,:\,\underbrace{{\bf w}}_{\in
S}\xrightarrow{P}\underbrace{\mathcal{A}_{\bf w}}_{\in
\mathcal{C}(\mathcal{X})} \overset{F}{\multimap}\underbrace{ N_{\bf w}}_{\subset \mathcal{X}_+} \xrightarrow{E}\underbrace{\{E(u),\,u\in
N_{\bf w}\}}_{\subset \mathbb{R}^n_+}
\xrightarrow{H}\underbrace{\displaystyle\mathop{\bigcup}_{u\in N_{\bf w}}H(E(u))}_{\subset S}.
\end{equation}
In \eqref{Phimap2}, as before, $P$ denotes the mapping which sends the parameter ${\bf w}$ 
to the parametrised closed generator $\mathcal{A}_{\bf w}$. We introduced above the notation 
$N_{\bf w}=\mathcal{S}_+\cap\text{span}\{u_i,\,i\in I_{\bf w}\}$, where $\mathcal{S}_+$ is the unit sphere intersected with the positive cone; and the $u_i$'s are the (linearly independent) eigenvectors corresponding to the 
spectral bound $s(\mathcal{A}_{\bf w})$. Hence the set-valued map $F$ assigns to each operator $\mathcal{A}_{\bf w}$ the linear span of the normalised non-negative eigenvectors corresponding to its spectral bound $0$. $H$ stands for the projection along rays into the set $S$. Note that since we assumed that the spectral bound is monotone along rays, 
$H$ is single valued. Also note that even when the semigroup generated by $\mathcal{A}_{\bf w}$ is not irreducible, but positive, its spectral bound still has at least one positive eigenvector (unless the semigroup is quasi-nilpotent). 
This is a consequence of the Krein-Rutman theorem applied to the positive semigroup (at some time $t_0$), and 
noting that the eigenvectors of the semigroup and its generator are the same.
If the algebraic multiplicity of the spectral bound $s\left(\mathcal{A}_{\bf w}\right)$ is greater than one, 
there may be multiple non-negative eigenvectors corresponding to the spectral bound.
In fact in this case there is a continuum of non-negative normalised eigenvectors 
corresponding to the spectral bound. If $u_1,u_2$ are non-negative normalised eigenvectors of $\mathcal{A}_{\bf w}$ 
for some ${\bf w}\in S$, then for any $\alpha\in [0,1]$ also $\alpha u_1+(1-\alpha)u_2$ is a non-negative eigenvector with norm $1$ if $\mathcal{X}$ is an AL-space, e.g. $L^1$. Hence for any $\mathcal{A}_{\bf w}$ the range of $F$ is non-empty, convex and closed. $E$ is well-defined and continuous and $P$ is continuous by hypothesis.  Recall that a correspondence $\Gamma\,:\,A\to B$ is called upper hemicontinuous (u.h.c. for short) at $a\in A$ if and only if for any open neighbourhood $V$ of $\Gamma(a)$ there exists a neighbourhood $U$ of $a$ such that for all $x$ in $U$, $\Gamma(x)$ is a subset of $V$.  
Our goal is to show that $F$ is upper hemicontinuous and apply Kakutani's fixed point theorem to the map $h\circ\Phi\circ h^{-1}$. Here we recall Kakutani's fixed point theorem in a general form for the readers convenience (see e.g. Corollary 10.3.10 in \cite{GD}).
\begin{theorem}\label{Kakutani}
Let $V$ be a locally convex topological space, and $C$ be a non-empty convex subset of $V$. Let $f\,: V\to V$ be an upper hemicontinuous  set valued map, such that $f(v)$ is closed and convex for all $v\in V$. Then $f$ has a fixed point.
\end{theorem}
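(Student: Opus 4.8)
The statement is the Fan--Glicksberg generalisation of Kakutani's theorem; I read the hypotheses as intending $C$ to be compact as well as convex and $f\colon C\multimap C$ to map $C$ into itself (this is the only situation we invoke, since in the application $C=\Sigma$ is the compact simplex). The plan is to deduce the result from Brouwer's fixed point theorem through a finite-dimensional approximation, first proving the finite-dimensional Kakutani theorem and then lifting it to a general locally convex space $V$ by approximate selections.

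First I would treat the case $C\subset\mathbb{R}^n$. Choose a sequence of triangulations of $C$ whose mesh tends to $0$. At each vertex $v$ pick a point $g_k(v)\in f(v)$ (possible since the values are non-empty) and extend $g_k$ affinely over each simplex; because $C$ is convex and $f(C)\subseteq C$, each $g_k$ is a continuous self-map of $C$, so Brouwer yields a fixed point $x_k=g_k(x_k)$. Writing $x_k=\sum_i\lambda_i^k v_i^k$ in barycentric coordinates over the simplex containing it gives $x_k=\sum_i\lambda_i^k\,g_k(v_i^k)$ with $g_k(v_i^k)\in f(v_i^k)$ and all vertices $v_i^k\to x^\ast$ along a convergent subsequence $x_k\to x^\ast$. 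Fixing an open convex set $V\supseteq f(x^\ast)$, upper hemicontinuity forces $f(v_i^k)\subset V$ for large $k$, hence $g_k(v_i^k)\in V$, and \emph{convexity of $V$} places the convex combination $x_k$ in $V$ as well; shrinking $V$ down to the closed set $f(x^\ast)$ then gives $x^\ast\in f(x^\ast)$.

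To reach the general locally convex $V$, I would keep the same idea but replace triangulations by partitions of unity. For each convex symmetric neighbourhood $U$ of $0$, compactness of $C$ supplies a finite cover by $U$-translates with a subordinate continuous partition of unity $\{p_j\}$ centred at points $c_j\in C$; choosing $y_j\in f(c_j)$ and setting $f_U(x)=\sum_j p_j(x)\,y_j$ produces a continuous map whose image lies in the finite-dimensional polytope $\mathrm{conv}\{y_j\}\subseteq C$. Applying the finite-dimensional case to $f_U$ restricted to this polytope gives $x_U=f_U(x_U)$; since only indices with $c_j$ close to $x_U$ contribute, upper hemicontinuity makes $x_U$ an \emph{approximate} fixed point in the sense that $x_U\in f(x_U)+U$. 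Directing the neighbourhoods $U$ by reverse inclusion produces a net $(x_U)$, which by compactness has a cluster point $x^\ast$; invoking the closedness of the graph of $f$ (an upper hemicontinuous correspondence with closed values into a compact set has closed graph) lets me pass to the limit and conclude $x^\ast\in f(x^\ast)$.

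The main obstacle I anticipate is precisely this passage to the limit: interchanging the limiting process with upper hemicontinuity so that the approximate fixed points converge to a genuine one. Convexity of the values $f(v)$ is what makes it work, since it guarantees that convex combinations of near-selections remain near $f(x^\ast)$, while local convexity of $V$ is exactly what allows the partition-of-unity approximations to land in finite-dimensional convex polytopes where Brouwer is available. Verifying the approximate-fixed-point estimate $x_U\in f(x_U)+U$ and the closed-graph property carefully is the technical heart of the argument.
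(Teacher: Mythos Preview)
The paper does not prove this theorem at all: it is simply quoted from the literature as Corollary~10.3.10 in Granas--Dugundji~\cite{GD}, with no argument given. So there is nothing to compare your proof against on the paper's side.

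Your sketch is the standard Fan--Glicksberg route (Brouwer $\Rightarrow$ Kakutani in $\mathbb{R}^n$ via simplicial approximation, then partition-of-unity approximate selections to lift to a compact convex set in a locally convex space), and the outline is correct. You are also right to flag that the statement as printed needs $C$ compact and $f\colon C\multimap C$; without compactness the result is false, and indeed the only use the paper makes of the theorem is with $C=\Sigma$, the finite-dimensional simplex. One small point in your finite-dimensional step: when you ``shrink $V$ down to the closed set $f(x^\ast)$'' you should say more precisely that any point outside the closed convex set $f(x^\ast)$ can be separated from it by an open convex neighbourhood, so if $x^\ast\notin f(x^\ast)$ you could choose $V$ missing $x^\ast$, contradicting $x_k\in V$ for large $k$. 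Otherwise the plan is sound, and the closed-graph passage at the end is exactly the right way to finish.
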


Next we note that for any ${\bf w}\in\mathbb{R}^2_+$ if $\mathcal{A}_{\bf w}$ is a generator of an eventually compact semigroup then the algebraic multiplicity of the spectral bound (which is the dimension of the generalised eigenspace)  is finite, say $n$. It follows from Theorem \ref{spectrumcont} that in fact the algebraic multiplicity of the spectral bound is constant on the level set $S$, since the only continuous step function is the constant function.
\begin{lemma}\label{hemicont}
If for every ${\bf w}\in S$ the geometric multiplicity of the spectral bound of $\mathcal{A}_{\bf w}$ equals the algebraic multiplicity and the corresponding eigenspace has a basis of non-negative eigenvectors, then the set-valued map $F$ is upper hemicontinuous.
\end{lemma}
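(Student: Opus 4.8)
The plan is to verify upper hemicontinuity of $F$ directly from the definition, the engine being the norm-continuity of the spectral projections provided by Theorem \ref{spectrumcont} combined with the finite-dimensionality of the eigenspaces along $S$. Fix ${\bf w}_0\in S$. Because the semigroup generated by $\mathcal{A}_{{\bf w}_0}$ is eventually compact, its spectral bound $s(\mathcal{A}_{{\bf w}_0})=0$ is an isolated eigenvalue of finite algebraic multiplicity, so I can surround it by a small rectifiable simple closed curve $\Gamma$ separating it from the rest of $\sigma(\mathcal{A}_{{\bf w}_0})$. Since $P$ is continuous by hypothesis, ${\bf w}_k\to{\bf w}_0$ gives $\mathcal{A}_{{\bf w}_k}\to\mathcal{A}_{{\bf w}_0}$ in the generalised sense, so Theorem \ref{spectrumcont} applies for all ${\bf w}$ near ${\bf w}_0$: the part of $\sigma(\mathcal{A}_{\bf w})$ inside $\Gamma$ carries the same (constant, by the remark preceding the lemma) total algebraic multiplicity, and the associated spectral projection $P[\mathcal{A}_{\bf w}]$ converges to $P[\mathcal{A}_{{\bf w}_0}]$ in operator norm.

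The decisive observation is that, under the hypothesis that the geometric and algebraic multiplicities of the spectral bound coincide, the range of $P[\mathcal{A}_{\bf w}]$ is exactly the eigenspace $\ker\mathcal{A}_{\bf w}$ and not a strictly larger generalised eigenspace. Consequently every element of $N_{\bf w}=\mathcal{S}_+\cap\ker\mathcal{A}_{\bf w}$ is fixed by $P[\mathcal{A}_{\bf w}]$, and any element of $\mathrm{Range}\,P[\mathcal{A}_{{\bf w}_0}]$ obtained as a limit of such vectors is automatically a genuine eigenvector of $\mathcal{A}_{{\bf w}_0}$ rather than merely a generalised one.

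With these ingredients in hand I would argue by contradiction from the definition of upper hemicontinuity. If $F$ failed to be u.h.c. at ${\bf w}_0$, there would be an open set $V\supseteq N_{{\bf w}_0}$, a sequence ${\bf w}_k\to{\bf w}_0$ in $S$, and vectors $v_k\in N_{{\bf w}_k}\setminus V$. Each $v_k$ is a non-negative unit vector with $v_k=P[\mathcal{A}_{{\bf w}_k}]v_k$, whence $\|v_k-P[\mathcal{A}_{{\bf w}_0}]v_k\|\le\|P[\mathcal{A}_{{\bf w}_k}]-P[\mathcal{A}_{{\bf w}_0}]\|\to 0$. Thus the projections $P[\mathcal{A}_{{\bf w}_0}]v_k$ form a bounded sequence in the fixed finite-dimensional space $\mathrm{Range}\,P[\mathcal{A}_{{\bf w}_0}]$; passing to a subsequence they converge to some $v_*$ there, and by the previous estimate $v_k\to v_*$ as well. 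Since $\mathcal{X}_+$ is closed and the norm is continuous, $v_*\in\mathcal{S}_+$, and by the decisive observation $v_*\in\ker\mathcal{A}_{{\bf w}_0}$, so $v_*\in N_{{\bf w}_0}\subseteq V$. As $V$ is open and $v_k\to v_*$, eventually $v_k\in V$, contradicting $v_k\notin V$; hence $F$ is u.h.c.

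The step I expect to be the main obstacle is the decisive observation itself, since the whole argument collapses unless the limit $v_*$ is a true eigenvector. Generalised convergence of the operators alone does not permit passing to the limit in $\mathcal{A}_{{\bf w}_k}v_k=0$, because of the domain difficulties inherent to unbounded operators; it is precisely the coincidence of geometric and algebraic multiplicity that identifies $\mathrm{Range}\,P[\mathcal{A}_{{\bf w}_0}]$ with $\ker\mathcal{A}_{{\bf w}_0}$ and thereby secures the conclusion. I would additionally be careful to record that the algebraic multiplicity is constant along $S$ (already noted before the lemma), so that the inner spectral parts have matching dimension as ${\bf w}\to{\bf w}_0$ and Theorem \ref{spectrumcont} is invoked consistently; the hypothesis that the eigenspace admits a basis of non-negative eigenvectors is what guarantees that each $N_{\bf w}$ is non-empty, keeping $F$ a genuine correspondence.
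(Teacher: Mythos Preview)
Your proof is correct and follows essentially the same approach as the paper: both arguments hinge on Theorem~\ref{spectrumcont} (norm-continuity of the spectral projections) together with the hypothesis that geometric and algebraic multiplicities coincide, so that the range of the spectral projection is exactly the eigenspace and any limit vector is a genuine eigenvector. Your version is in fact slightly more careful, since you verify the open-set definition of upper hemicontinuity directly and explicitly use finite-dimensionality of the eigenspace to extract a convergent subsequence, whereas the paper invokes a ``sequential characterisation'' that is really the closed-graph property and tacitly relies on the same compactness to upgrade it to u.h.c.
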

\begin{proof}
Let us recall the sequential characterisation of upper hemicontinuity of a correspondence $\Gamma:\,A\to B$. That is $\Gamma$ is u.h.c. at $a\in A$, if for any sequences $a_n\in A,\,b_n\in\Gamma(a_n)$, and $b\in B$, if 
$\displaystyle\lim_{n\to \infty}a_n=a$ and $\displaystyle\lim_{n\to\infty} b_n=b$ then $b\in \Gamma(a)$. 
Let ${\bf w}\in S$. We apply \cite{Kato}[Theorem 3.16, Sect.IV.] to show that $F$ is u.h.c at $\mathcal{A}_{\bf w}$. 
Let $\Sigma'(\mathcal{A}_{\bf w})=\left\{s(\mathcal{A}_{\bf w})\right\}$ and $\Sigma''(\mathcal{A}_{\bf w})$ consists of the rest of the spectrum. We have the decomposition $\mathcal{X}=M'(\mathcal{A}_{\bf w})\oplus M''(\mathcal{A}_{\bf w})$, where $M'(\mathcal{A}_{\bf w})$ and $M''(\mathcal{A}_{\bf w})$ are the generalised eigenspaces corresponding to 
the spectral sets $\Sigma'(\mathcal{A}_{\bf w})$ and $\Sigma''(\mathcal{A}_{\bf w})$, respectively. 
Note that our assumptions imply that in fact $M'(\mathcal{A}_{\bf w})\cap\mathcal{S}_+=N_{\bf w}=F(\mathcal{A}_{\bf w})$. 
Now let $\mathcal{A}_{\bf w_n}$ be a sequence converging to $\mathcal{A}_{\bf w}$ in the generalised sense, and 
${\bf u}_{\bf w_n}\in F(\mathcal{A}_{\bf w_n})$ a sequence of eigenvectors converging to ${\bf u_w}$. Similarly, let 
$\Sigma'(\mathcal{A}_{\bf w_n})=\left\{s(\mathcal{A}_{\bf w_n})\right\}$ and $\Sigma''(\mathcal{A}_{\bf w_n})$ consists of the rest of the spectrum, and $\mathcal{X}=M'(\mathcal{A}_{\bf w_n})\oplus M''(\mathcal{A}_{\bf w_n})$. 
\cite{Kato}[Theorem 3.16, Sect.IV.] states that the decomposition $\mathcal{X}=M'(\mathcal{A}_{\bf w_n})\oplus M''(\mathcal{A}_{\bf w_n})$ is continuous in $\mathcal{A}_{\bf w_n}$ in the sense that the spectral projection $P(\mathcal{A}_{\bf w_n})$ onto $M'(\mathcal{A}_{\bf w_n})$ along $M''(\mathcal{A}_{\bf w_n})$ 
converges to $P(\mathcal{A}_{\bf w})$ in the generalised sense. This implies that 
$\text{dim}(M'(\mathcal{A}_{\bf w_n}))=\text{dim}(M'(\mathcal{A}_{\bf w}))$. Hence we have necessarily that ${\bf u_w}\in M'(\mathcal{A}_{\bf w})\cap\mathcal{S}_+=N_{\bf w}=F(\mathcal{A}_{\bf w})$, i.e. ${\bf u}_{\bf w}$ is a normalised non-negative eigenvector of $\mathcal{A}_{\bf w}$.
\qquad\end{proof}

\begin{proposition}\label{l1}
Let $S$ be as above and $\Phi$ is defined in \eqref{Phimap}-\eqref{Phimap2}. 
Then every fixed point $s^*$ of $\Phi$ determines at least one positive
solution of the steady state problem \eqref{problemss}.
\end{proposition}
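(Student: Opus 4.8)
The plan is to trace a fixed point $s^*\in\Phi(s^*)$ backward through the chain of maps $H\circ E\circ F\circ P$ and recover, after an appropriate rescaling, a vector satisfying all three requirements of \eqref{problemss}. First I would unwind the fixed point condition. Since $P(s^*)=\mathcal{A}_{s^*}$ and $\Phi(s^*)=\bigcup_{u\in N_{s^*}}H(E(u))$, the relation $s^*\in\Phi(s^*)$ guarantees that there is at least one element $u\in N_{s^*}=F(\mathcal{A}_{s^*})$ with $H(E(u))=s^*$. By the definition of $N_{\bf w}$, this $u$ is a unit-norm, non-negative vector lying in the span of eigenvectors of $\mathcal{A}_{s^*}$ associated with its spectral bound; as that span is contained in the geometric eigenspace, $u$ is itself such an eigenvector. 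Because $s^*\in S$ we have $s(\mathcal{A}_{s^*})=0$, and hence $\mathcal{A}_{s^*}u=0$: so $u$ already solves the eigenvalue equation with eigenvalue zero.

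The remaining issue is the environmental (feedback) condition $E(u)=s^*$, which will in general fail for the normalised $u$ and must be restored by scaling. Here I would use that $H$ is the projection along positive rays onto $S$, which is well defined precisely because monotonicity of the spectral bound along rays forces $S$ to cross each ray exactly once. Thus $H(E(u))=s^*$ means that $E(u)$ lies on the ray through $s^*$, i.e. $E(u)=c\,s^*$ for some $c>0$; the strict positivity of $c$ follows from the strict positivity of $E$ together with $u\in\mathcal{X}_+\setminus\{0\}$. Setting $\tilde u:=u/c$ and invoking linearity of $E$ then yields $E(\tilde u)=E(u)/c=s^*$.

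Finally I would verify that $\tilde u$ solves \eqref{problemss} with parameter ${\bf u}=s^*$. The feedback holds by construction, $E(\tilde u)=s^*$; the eigenvalue equation holds because $\mathcal{A}_{E(\tilde u)}\tilde u=\mathcal{A}_{s^*}(u/c)=c^{-1}\mathcal{A}_{s^*}u=0$; and $\tilde u=u/c\in\mathcal{X}_+$ with $\tilde u\neq 0$, since $u$ has unit norm and $c>0$. Hence $\tilde u$ is a positive steady state, which is what the proposition asserts.

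I do not expect a genuine analytic obstacle: the statement is essentially a bookkeeping argument reversing the construction of $\Phi$, and it parallels the closing paragraph of the proof of Theorem \ref{irreducible}. The one point demanding care is the scaling step, namely observing that the ray-projection identity $H(E(u))=s^*$ supplies only the correct \emph{direction} of $E(u)$, and that a single positive factor $c$ can simultaneously rescale $u$ to enforce the feedback condition while leaving the homogeneous, zero-eigenvalue equation intact. This works precisely because both $\mathcal{A}_{s^*}u=0$ and the linearity of $E$ are invariant under positive scalar multiplication.
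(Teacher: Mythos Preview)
Your proof is correct and follows essentially the same approach as the paper's: unwind the composition $H\circ E\circ F\circ P$ to extract a normalised non-negative eigenvector $u$ of $\mathcal{A}_{s^*}$ with eigenvalue $0$, then rescale by the unique positive factor supplied by the ray-projection identity $H(E(u))=s^*$ so that the feedback condition $E(\tilde u)=s^*$ holds. Your treatment is in fact slightly more explicit than the paper's, spelling out why $c>0$ (via strict positivity of $E$) and verifying each clause of \eqref{problemss} separately, but the logical skeleton is identical.
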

\begin{proof} Note that the assumption that $S$ intersects every positive ray 
implies that the map $\Phi$ is well defined. Let $s^*\in S$ such that $s^*\in \Phi(s^*)$,
i.e. $s^*\in H\circ E\circ F\circ
P(s^*)$. It follows that there exists $x^*\in E\circ F\circ P(s^*)$
such that $s^*=H(x^*)$, which implies
that there exists $u^*\in F\circ P(s^*)$ such that $E(u^*)=x^*$.
As before, we only need to show that there exist a $\lambda>0$ such that $E(\lambda
u^*)=s^*$, i.e., such that $E(\lambda u^*)=\lambda E(u^*)=\lambda x^*=s^*$, which holds for some 
unique positive $\lambda$ since
$s^*=H(x^*)$ and $H$ is the projection into $S$ along positive rays. Clearly $\lambda u^*$ is a positive solution 
of \eqref{problemss}. \qquad\end{proof}

\begin{theorem}\label{nonirreducible}
Assume that the linear operator $\mathcal{A}_{\bf
u}$ generates a positive and eventually compact
semigroup of bounded linear operators for every ${\bf
u}\in\mathbb{R}^n_+$ and that $\bf{u}_n \rightarrow
\bf{u}$ implies $\mathcal{A}_{\bf u_n} \rightarrow \mathcal{A}_{\bf
u}$ in the generalised sense. Moreover, assume that $s(\mathcal{A}_{\bf 0})>0$, and 
there exists an $R>0$ such that $s(\mathcal{A}_{\bf u})<0$ for $||{\bf u}||\ge R$; and that the spectral 
bound function is monotone along rays in the positive cone. Further assume
that $E$ is a strictly positive linear functional and the assumptions of Lemma \ref{hemicont} hold true. 
Then the steady state problem \eqref{problemss} has a solution.
\end{theorem}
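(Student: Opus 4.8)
The plan is to reduce the steady state problem to a fixed point of the set-valued map $\Phi=H\circ E\circ F\circ P$ of \eqref{Phimap}, transported to the simplex $\Sigma$ by the homeomorphism $h$, and then to invoke Kakutani's theorem (Theorem \ref{Kakutani}). First I would set up the level set. By Theorem \ref{spectrumcont} the hypothesis that ${\bf u}_n\to{\bf u}$ forces $\mathcal{A}_{{\bf u}_n}\to\mathcal{A}_{\bf u}$ in the generalised sense makes ${\bf u}\mapsto s(\mathcal{A}_{\bf u})$ continuous on $\mathbb{R}^n_+$. Since $s(\mathcal{A}_{\bf 0})>0$ and $s(\mathcal{A}_{\bf u})<0$ for $\|{\bf u}\|\ge R$, monotonicity of $s$ along each positive ray together with the intermediate value theorem gives a unique zero on every ray; hence the level set $S=\{{\bf z}\,:\,s(\mathcal{A}_{\bf z})=0\}$ meets every ray exactly once, it is connected, and the radial projection $h:S\to\Sigma$ is a homeomorphism. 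On $S$ the value $0$ is the spectral bound of a positive, eventually compact semigroup, hence an eigenvalue (the spectral bound of an eventually compact semigroup is a pole of the resolvent) whose eigenspace contains a non-negative vector by Krein-Rutman applied at some time $t_0$; thus $N_{\bf w}\ne\emptyset$ and, as noted in Proposition \ref{l1}, $\Phi$ is well defined on $S$.

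Next I would verify the Kakutani hypotheses for the conjugated map $h\circ\Phi\circ h^{-1}:\Sigma\multimap\Sigma$ on the compact convex set $\Sigma$. Upper hemicontinuity is the substantive point: $F$ is u.h.c.\ by Lemma \ref{hemicont} (whose assumptions are in force), while $P$, $E$, and the radial projections $H$ and $h$ are single-valued and continuous; upper hemicontinuity is preserved under pre- and post-composition with continuous single-valued maps and under conjugation by the homeomorphism $h$, so $h\circ\Phi\circ h^{-1}$ is u.h.c. For closedness of the values, each $N_{\bf w}$ lies in the finite-dimensional generalised eigenspace $M'(\mathcal{A}_{\bf w})$ (finite dimensional by eventual compactness, of constant dimension along $S$ by Theorem \ref{spectrumcont}), so $N_{\bf w}$ is compact; since $E$ is continuous and strictly positive, $E(N_{\bf w})$ is a compact subset of $\mathbb{R}^n_+\setminus\{0\}$, and $h$ then maps it to a compact, hence closed, subset of $\Sigma$.

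Convexity of the values I would obtain from a cone argument. Because a prior radial projection does not change the subsequent radial projection, $h\circ H$ coincides with the radial projection onto $\Sigma$, so the value $h(\Phi(h^{-1}({\bf v})))$ equals the radial image onto $\Sigma$ of $E$ applied to the non-negative eigenvectors in the eigenspace. Now $\mathrm{span}\{u_i\}\cap\mathcal{X}_+$ is a convex cone, its linear image $E(\mathrm{span}\{u_i\}\cap\mathcal{X}_+)$ is a convex cone in $\mathbb{R}^n_+$, and the radial image onto $\Sigma$ of a convex cone is its intersection with the hyperplane $\Sigma$, which is convex. (In an AL-space such as $L^1$ one may instead argue directly that $N_{\bf w}$ itself is convex, as indicated before \eqref{Phimap}, since there $\|\alpha u_1+(1-\alpha)u_2\|=1$ for normalised non-negative $u_1,u_2$.) With all hypotheses checked, Theorem \ref{Kakutani} yields ${\bf v}^*\in\Sigma$ with ${\bf v}^*\in h(\Phi(h^{-1}({\bf v}^*)))$; setting $s^*=h^{-1}({\bf v}^*)\in S$ gives $s^*\in\Phi(s^*)$, and Proposition \ref{l1} converts this fixed point into a positive solution of \eqref{problemss}.

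I expect the main obstacle to be the value-theoretic bookkeeping rather than the fixed point step itself: namely, guaranteeing that the possibly multidimensional eigenspace produces convex, closed values after the linear functional $E$ and the nonlinear radial projection, and that upper hemicontinuity survives the entire composition $h\circ E\circ F\circ P\circ h^{-1}$. Convexity is the delicate ingredient, since the raw set $N_{\bf w}$ of normalised non-negative eigenvectors need not be convex in a general Banach lattice; this is precisely where either the AL-space structure or the cone/central-projection observation becomes essential.
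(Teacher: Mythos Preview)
Your proposal is correct and follows essentially the same route as the paper: transport $\Phi$ to $\Sigma$ via the homeomorphism $h$, verify the Kakutani hypotheses for $h\circ\Phi\circ h^{-1}$ (upper hemicontinuity from Lemma \ref{hemicont}, convexity of values from the observation $h\circ H=h$ together with convexity being preserved by $E$ and by radial projection onto $\Sigma$), and then invoke Proposition \ref{l1}. You supply more detail than the paper does---the explicit construction of $S$ and $h$, the closedness check via finite-dimensionality of the generalised eigenspace, and an alternative cone argument for convexity---but the architecture is the same.
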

\begin{proof} If $F$ is u.h.c. then so is $h\circ\Phi\circ h^{-1}$. Next we note that $h\circ H=h$ and the image of a convex set by $E$ is convex. Furthermore, the projection of a convex subset 
of $\mathbb{R}^n_+$ into $\Sigma$ via $h$ is convex, too, hence we have that for any ${\bf x}\in\Sigma$ the set $(h\circ\Phi\circ h^{-1})({\bf x})$ is convex in $\Sigma$.  Theorem \ref{Kakutani} guarantees the existence of a fixed point ${\bf x}$ of the map $h\circ\Phi\circ h^{-1}$, which in turn implies the existence of a fixed point $h^{-1}({\bf x})$ of the map $\Phi$. Therefore, by Proposition \ref{l1} we obtain the existence of a positive steady state of \eqref{problem}.
\end{proof}

{\sc Remark}
Note that we may define the map $F$ such that it maps the operator $\mathcal{A}_{\bf w}$ 
into the generalised eigenspace intersected with the unit positive sphere. Even when the geometric multiplicity of the spectral bound is less than the algebraic multiplicity, $F$ is still shown to be u.h.c by Theorem \ref{spectrumcont}. But we may obtain as a fixed point a vector which is not a steady state. Also to establish u.h.c. of $F$ we would still need to guarantee that the generalised eigenspace $M(s(\mathcal{A}_{\bf w}))$ is spanned by non-negative vectors.

\medskip

The second somewhat independent characterisation of the existence of the positive steady state is based on the following Lemma. This characterisation works in the case of a non-monotone spectral bound function, but only for two-dimensional nonlinearities, and the main assumption seems to be very difficult to verify in the case of concrete models.

\begin{lemma}\label{fixgen}
Let $\gamma\subset\mathbb{R}^2_+$ and assume that there exists a homeomorphism $h$ such that $h(\gamma)=[0,1]$. Let $\Phi\,:\,\gamma\multimap \gamma$ be a set valued map. If the set
\begin{equation}\label{Kdef}
K=\{(x,y)\in[0,1]^2\,|\, y\in h(\Phi(h^{-1}(x)))\}
\end{equation}
is connected, then $\Phi$ has a fixed point, i.e.,
there exists a $q^*\in \gamma$ such that $q^*\in \Phi(q^*)$.
\end{lemma}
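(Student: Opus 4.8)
The plan is to recast the fixed-point statement as a crossing statement for $K$ across the diagonal of the unit square and then to close the argument by a connectedness/separation argument in the spirit of the path-crossing lemma used in Lemma \ref{levelset}. Writing $\Delta=\{(x,x)\,:\,x\in[0,1]\}$ for the diagonal, the first step I would take is to observe that fixed points of $\Phi$ correspond exactly to points of $K$ lying on $\Delta$. Indeed, setting $x^*=h(q^*)$ (so $q^*=h^{-1}(x^*)$) and using that $h$ is injective, the condition $q^*\in\Phi(q^*)$ is equivalent to $h(q^*)\in h(\Phi(h^{-1}(x^*)))$, i.e. to $x^*\in h(\Phi(h^{-1}(x^*)))$, which by \eqref{Kdef} says precisely that $(x^*,x^*)\in K$. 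Hence it suffices to prove that $K\cap\Delta\neq\emptyset$.

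To this end I would argue by contradiction. Suppose $K\cap\Delta=\emptyset$ and decompose the complement of the diagonal into the two open triangles
\begin{equation*}
A=\{(x,y)\in[0,1]^2\,:\,y>x\},\qquad B=\{(x,y)\in[0,1]^2\,:\,y<x\},
\end{equation*}
which are disjoint and relatively open in $[0,1]^2$. Since $K\subseteq[0,1]^2$ avoids $\Delta$ by assumption and $A\cup B\cup\Delta=[0,1]^2$, we get $K=(K\cap A)\cup(K\cap B)$, a decomposition of $K$ into two disjoint, relatively open subsets. If both were non-empty this would contradict the connectedness of $K$, so the whole argument reduces to exhibiting one point of $K$ in $A$ and one in $B$.

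The two required points I would extract from the boundary fibres $x=0$ and $x=1$. Because $\Phi$ takes non-empty values in $\gamma$, the fibre $h(\Phi(h^{-1}(0)))\subseteq[0,1]$ is non-empty; choosing any element $y_0$ in it gives $(0,y_0)\in K$, and since $K$ misses $\Delta$ necessarily $y_0\neq 0$, so $y_0>0$ and $(0,y_0)\in A$. Symmetrically, the non-empty fibre $h(\Phi(h^{-1}(1)))$ yields a point $(1,y_1)\in K$ with $y_1\neq 1$, hence $y_1<1$ and $(1,y_1)\in B$. Thus $K\cap A\neq\emptyset$ and $K\cap B\neq\emptyset$, contradicting connectedness. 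Therefore $K\cap\Delta\neq\emptyset$, and any $(x^*,x^*)$ in this intersection delivers the fixed point $q^*=h^{-1}(x^*)\in\Phi(q^*)$.

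I do not expect a serious obstacle, since the argument is elementary once the problem is transported to the square. The one point that must be handled with care is the non-emptiness of the boundary fibres: this is exactly what forces $K$ to straddle the diagonal rather than lying entirely on one side, and it relies on the standing convention that the set-valued map $\Phi$ has non-empty values at every point of $\gamma$, in particular at $h^{-1}(0)$ and $h^{-1}(1)$. If one preferred to dispense with that convention, it would suffice to assume instead that $K$ meets both edges $\{0\}\times[0,1]$ and $\{1\}\times[0,1]$, which is precisely what the two fibre computations establish.
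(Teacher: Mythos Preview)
Your argument is correct and follows essentially the same route as the paper's own proof: both establish that $K$ meets the fibres $\{0\}\times[0,1]$ and $\{1\}\times[0,1]$ via non-emptiness of $\Phi$, then use connectedness of $K$ to force an intersection with the diagonal, and finally translate back via $h^{-1}$. Your version simply spells out the diagonal-crossing step more explicitly as a contradiction using the open triangles $A$ and $B$, whereas the paper states it in one line.
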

\begin{proof} First let us note that $K\cap\{(0,y)\,|\,y\in
[0,1]\}\ne\emptyset$ since it contains the point $(0,y)$ whenever $y\in h(\Phi(h^{-1}(0)))\ne \emptyset$. 
Similarly,
$K\cap\{(1,y)\,|\, y\in [0,1]\}\ne\emptyset$ because it contains $(1,y)$ whenever $y\in h(\Phi(h^{-1}(1)))\ne \emptyset$. 
Since $K$ is connected, it intersects the diagonal of $[0,1]\times [0,1]$, i.e. there exists a $x^*\in [0,1]$ such that $x^*\in h(\Phi(h^{-1}(x^*))).$ Equivalently $h^{-1}(x^*)\in\Phi(h^{-1}(x^*))$, which means that
$q^*=h^{-1}(x^*)$ is a fixed point of $\Phi$ on $\gamma$.
\qquad\end{proof}

Proposition \ref{l1} and Lemma \ref{fixgen} yield the following.
\begin{corollary}\label{l2} Assume that $s(\mathcal{A}_{\bf 0})>0$, and 
there exists an $R>0$ such that $s(\mathcal{A}_{\bf u})<0$ for $||{\bf u}||\ge R$. 
If the set $S$, given by Lemma \ref{levelset} applied to the spectral bound function, is homeomorphic to the interval $[0,1]$ and the set $K$ defined in 
\eqref{Kdef} (with $\gamma=S$) is connected, then the steady state problem \eqref{problemss} has a solution.
\end{corollary}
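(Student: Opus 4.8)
The plan is to combine the two ingredients that Corollary~\ref{l2} names explicitly, namely Lemma~\ref{levelset} and Lemma~\ref{fixgen}, together with Proposition~\ref{l1}, so that essentially all the analytic work has already been done and only the verification of hypotheses remains. First I would invoke Lemma~\ref{levelset}, applied to the spectral bound function $f({\bf z})=s(\mathcal{A}_{\bf z})$, which is legitimate because the standing assumptions $s(\mathcal{A}_{\bf 0})>0$ and $s(\mathcal{A}_{\bf u})<0$ for $\|{\bf u}\|\ge R$ are exactly the sign conditions $f(0,0)>0$ and $f<0$ outside a ball that the lemma requires (continuity of the spectral bound being supplied as before by Theorem~\ref{spectrumcont}). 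This produces a compact connected subset $S$ of the zero level set $Z$ meeting both positive axes. The hypothesis of the corollary then supplies a homeomorphism $h\colon S\to[0,1]$, so that $S$ plays the role of the set $\gamma$ in Lemma~\ref{fixgen}.

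Next I would define the multivalued map $\Phi\colon S\multimap S$ precisely as in \eqref{Phimap}--\eqref{Phimap2}, that is $\Phi=H\circ E\circ F\circ P$, where $P$ sends ${\bf w}$ to the closed generator $\mathcal{A}_{\bf w}$, $F$ assigns the (normalised, non-negative) eigenvectors belonging to the spectral bound $0$, $E$ is the strictly positive linear functional, and $H$ is the projection along rays back into $S$. Since every point of $S$ lies on the zero level set, $0$ is the spectral bound of $\mathcal{A}_{\bf w}$ there, and by the Krein--Rutman argument recalled after \eqref{Phimap2} it has at least one non-negative eigenvector; hence $F({\bf w})$ is non-empty and $\Phi$ is a well-defined set-valued self-map of $S$. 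With $\gamma=S$ in hand, I would form the set $K$ of \eqref{Kdef} and apply Lemma~\ref{fixgen}: its connectedness hypothesis is assumed in the statement of the corollary, so the lemma immediately yields a point $q^*\in S$ with $q^*\in\Phi(q^*)$.

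Finally I would feed this fixed point into Proposition~\ref{l1}. That proposition states that any fixed point $s^*$ of $\Phi$ on such an $S$ determines a positive solution of \eqref{problemss}: unravelling $s^*\in H\circ E\circ F\circ P(s^*)$ gives an eigenvector $u^*\in F\circ P(s^*)$ and a scalar $\lambda>0$ with $E(\lambda u^*)=s^*$, the existence and uniqueness of $\lambda$ following from $H$ being the ray-projection into $S$ and $E$ being strictly positive; then $\lambda u^*$ is the desired positive steady state. Putting the three steps together proves the corollary.

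I expect the only genuinely non-routine point to be confirming that the map $\Phi$ really does take values in $S$ and is well defined on all of $S$; this rests on $S$ consisting entirely of zeros of the spectral bound together with the non-emptiness of $F$, and the latter is exactly the Krein--Rutman statement that a positive (non-quasinilpotent) eventually compact semigroup has a positive eigenvector at its spectral bound. Everything else is a bookkeeping reduction to the cited lemmas, and indeed the corollary is phrased so as to assume away the hardest analytic hypothesis---the connectedness of $K$---which the authors themselves flag as difficult to verify in concrete models.
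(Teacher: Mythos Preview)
Your proposal is correct and follows exactly the route the paper intends: the paper's own proof is the single sentence ``Proposition~\ref{l1} and Lemma~\ref{fixgen} yield the following,'' and you have correctly unpacked that into (i) obtaining $S$ from Lemma~\ref{levelset}, (ii) applying Lemma~\ref{fixgen} with $\gamma=S$ to produce a fixed point of $\Phi$, and (iii) invoking Proposition~\ref{l1} to translate that fixed point into a positive steady state. Your check that $\Phi$ is well defined on $S$---via the Krein--Rutman argument for non-emptiness of $F$ and the fact that $S$, being connected and meeting both axes, is hit by every positive ray so that $H$ makes sense---is the right thing to verify, even though the paper does not spell it out.
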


\subsection{Intermezzo - A non-quasinilpotency result}

We note that, in the case when the semigroup generated by $\mathcal{A}_u$ is irreducible and eventually compact it follows  that $\mathcal{A}_u$ has an eigenvalue, see e.g. \cite[C-III Theorem 3.7]{AGG}. However, this is not necessarily the case when $\mathcal{A}_u$ does not generate an irreducible semigroup or the semigroup is not eventually compact. Note that our main assumptions in Theorems \ref{irreducible} and \ref{nonirreducible} are that the spectral bound $s(\mathcal{A}_{\bf u})$ changes sign along positive rays, which requires the existence of spectral values of the operators $\mathcal{A}_{\bf u}$.  Hence, to apply the machinery developed in the previous subsections, one has to guarantee in the first place that the spectrum of the generator contains at least one eigenvalue. Here we present a quite general result, which we believe is interesting enough on its own right, since it shows non-quasinilpotency for a wide class of operators without assuming irreducibility or compactness. 

In the previous subsections we mainly dealt with generators of strongly continuous semigroups. Here we will not restrict ourselves to semigroups or generators, hence to start with we introduce some basic notions and recall some definitions. 
Let $\mathcal{X}$ be a normed vector lattice and $\mathcal{L}$ be a (non-zero) bounded linear operator on $\mathcal{X}$.
The closed non-empty set $K\subset\mathcal{X}$ is called a cone if it satisfies:
\begin{enumerate}
\item $\forall f,g\in K$ and $\alpha,\beta\ge 0$: $\alpha f+\beta g\in K$,
\item $K\cap -K=\left\{0\right\}$,
\item $K\ne \{0\}$.
\end{enumerate}
We introduce the notations $K_0:=K\setminus\{0\}$ and $\mathcal{X}_0:=\mathcal{X}\setminus\{0\}$.
$\mathcal{L}$ is called $K$-positive if $\mathcal{L}(K)\subseteq K$, and we say that $\mathcal{L}$ is strictly $K$-positive if $\mathcal{L}(K_0)\subseteq K_0$. We say that $\mathcal{L}$ is positive (resp. strictly positive) if $K=\mathcal{X}_+$, the positive cone of $\mathcal{X}$.
A positive operator $\mathcal{L}$ is called (ideal) irreducible if it does not admit invariant ideals other than the trivial ones: $\{0\}$ and $\mathcal{X}$.
Note that irreducibility implies strict positivity (except the case of the zero operator in one dimension), but not vice versa, in fact strict positivity is a much
weaker condition than irreducibility, in general. The ideal $J$ of the lattice $\mathcal{X}$ is called a band if it is closed under supremum, and the operator $\mathcal{L}$ is called band irreducible if the only invariant bands are the trivial ones.
As usual, let $r(\mathcal{L})=\sup\{|\lambda|\,|\,\lambda\in\sigma(\mathcal{L})\}$ denote the spectral radius.
$\mathcal{L}$ is called quasi-nilpotent (or topologically nilpotent) if $r(\mathcal{L})=0$. $\mathcal{X}$ is called an AL-space (abstract Lebesgue space) if for every $f,g\in \mathcal{X}_+$ one has $||f+g||=||f||+||g||$, and $\mathcal{X}$ is called an AM-space if for every $f,g\in\mathcal{X}_+$ one has $||f\vee g||=||f||\vee ||g||$.  For further terminology not introduced here we refer to \cite{Sch}.

Spectral properties of positive irreducible operators have been studied extensively.  In particular, results about the non-quasinilpotency of irreducible operators on AL- and AM-spaces can be found already in \cite{Sch2}. Ben de Pagter proved in \cite{Pagter} that a compact irreducible operator on a Banach lattice (of dimension greater than one) is not quasi-nilpotent, i.e. its spectral radius is strictly positive. This result was extended in \cite{Sch3} for band irreducible operators. Note that a result which asserts that the spectrum of the generator of a strongly continuous semigroup on a Banach lattice is not empty,  also requires compactness and irreducibility, see Theorem 3.7 in \cite[Sect. C-III]{AGG}; or at least band irreducibility, see \cite{Sch4}. Band irreducibility is a somewhat weaker condition than irreducibility, in general.

Our goal here is to establish a quite general result, which does not require irreducibility or compactness of the operator. We only have to assume strict positivity and a condition about the compactness of the image of a specific set.
This condition however, seems to be independent of irreducibility, and does not imply compactness of the operator. Our result is obtained using a simple argument combined with Schauder's fixed point theorem, which we recall here for the reader's convenience in its most general form.

\begin{theorem}\label{Schauder} (\cite{Cauty})
Let $V$ be a separated (i.e. Hausdorff) topological vector space. Let $C$ be a non-empty convex subset of $V$ and
$f$ a continuous function from $C$ to $C$. If $f(C)$ is contained in a compact subset of $C$ then $f$ has a fixed point.
\end{theorem}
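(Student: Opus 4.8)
The plan is to derive the general statement from Brouwer's fixed point theorem by approximating $f$ with maps whose range lies in a finite-dimensional convex set, and then to pass to a limit using the compactness of $\overline{f(C)}$. Set $K=\overline{f(C)}$. By hypothesis $f(C)$ is contained in some compact $L\subseteq C$; as $L$ is closed in the Hausdorff space $V$, the set $K$ is a closed subset of $L$, hence itself compact and contained in $C$. The key observation is that any finite subset of $K$ already spans, by convexity of $C$, a finite-dimensional simplex sitting inside $C$ on which $f$ is defined, so the role of $K$ is only to supply good finite approximations of the identity.

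First I would dispatch the locally convex case, which is the classical Schauder--Tychonoff theorem and which already covers every situation arising in the applications of this paper, since there the underlying spaces are Banach lattices and hence locally convex. Fix a continuous seminorm $p$ and a number $\varepsilon>0$. Compactness of $K$ yields a finite $\varepsilon$-net $x_1,\dots,x_m\in K\subseteq C$; define the Schauder projection
\[
P_\varepsilon(x)=\frac{\sum_{i=1}^m\mu_i(x)\,x_i}{\sum_{i=1}^m\mu_i(x)},\qquad \mu_i(x)=\max\{0,\varepsilon-p(x-x_i)\},
\]
which is continuous on $K$, takes values in the simplex $\Delta=\mathrm{conv}\{x_1,\dots,x_m\}\subseteq C$, and satisfies $p(P_\varepsilon(x)-x)<\varepsilon$, because only the centres with $p(x-x_i)<\varepsilon$ contribute and $p$ is convex. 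Since $f(\Delta)\subseteq K$ and $P_\varepsilon(K)\subseteq\Delta$, the composition $P_\varepsilon\circ f$ restricts to a continuous self-map of the compact convex finite-dimensional set $\Delta$, so Brouwer's theorem provides $y_\varepsilon\in\Delta$ with $P_\varepsilon(f(y_\varepsilon))=y_\varepsilon$, whence $p(f(y_\varepsilon)-y_\varepsilon)<\varepsilon$. Refining over the directed family of pairs $(p,\varepsilon)$ and extracting a subnet of $(f(y_\varepsilon))\subseteq K$ convergent to some $z\in K$, the estimate forces $y_\varepsilon\to z$ as well, and continuity of $f$ then gives $f(z)=z$.

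The hard part, and the reason the theorem is quoted from \cite{Cauty}, is to remove local convexity. In a general Hausdorff topological vector space the origin need not possess a base of convex neighbourhoods, so the crucial estimate $p(P_\varepsilon(x)-x)<\varepsilon$ may fail: convex combinations of pairwise close points can escape a prescribed neighbourhood, and the Schauder projection no longer approximates the identity. Overcoming this obstruction is precisely Schauder's conjecture, which stood open for decades; Cauty's solution replaces the naive finite-dimensional projection by a far more delicate approximation of $f$ through simplicial complexes (the nerves of finite open covers of $K$) combined with algebraic-topological continuation arguments. For the present paper I would therefore invoke \cite{Cauty} for the full generality while recording that the elementary argument above suffices for all the concrete models treated here.
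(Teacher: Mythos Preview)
The paper does not prove this theorem at all: it is stated with a reference to \cite{Cauty} and then invoked as a black box in the proof of Theorem~\ref{theorem1}. So there is no argument in the paper to compare your proposal against.

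That said, your write-up is correct and does more than the paper does. The locally convex argument you give is the standard Schauder--Tychonoff proof via Schauder projections and Brouwer, and the details (compactness of $K=\overline{f(C)}\subseteq C$, the estimate $p(P_\varepsilon(x)-x)<\varepsilon$ from convexity of the seminorm, the net extraction over pairs $(p,\varepsilon)$) are all in order. You are also right that this elementary route already covers every application in the paper, since all the ambient spaces there are Banach lattices and hence locally convex; indeed, in Theorem~\ref{theorem1} the theorem is applied on a normed vector lattice. Your assessment of the non-locally-convex case is accurate as well: the obstruction is exactly that convex combinations of close points need not stay close, and resolving this is the content of Cauty's theorem, which one can only cite. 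In short, your proposal is a sound expansion of what the paper leaves implicit, with the honest caveat that the full generality of the stated theorem lies beyond an elementary argument.
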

The following is our general result.
\begin{theorem}\label{theorem1}
Let $\mathcal{X}$ be a normed vector lattice and $K$ be a cone. Let
$\mathcal{S}$ denote the unit sphere of $\mathcal{X}$ and let $C=Conv(\mathcal{S}\cap
K)$, the convex hull of $\mathcal{S}\cap K$. Let $\mathcal{L}$ be a strictly $K$-positive bounded linear operator.
If $\mathcal{L}(C)$ is relatively compact in $\mathcal{X}_0$, then
$\mathcal{L}$ has a positive eigenvalue with a corresponding eigenvector from $K$.
\end{theorem}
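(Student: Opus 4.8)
The plan is to obtain the eigenvector as a fixed point of the \emph{normalised} action of $\mathcal{L}$, and to produce that fixed point by Schauder's theorem (Theorem~\ref{Schauder}). Concretely, I would introduce the map
\begin{equation*}
T\,:\,C\to\mathcal{X},\qquad T(f)=\frac{\mathcal{L}(f)}{\|\mathcal{L}(f)\|},
\end{equation*}
and note that any fixed point $f^*=T(f^*)$ immediately gives $\mathcal{L}(f^*)=\lambda f^*$ with $\lambda=\|\mathcal{L}(f^*)\|>0$; so the whole argument reduces to checking that $T$ satisfies the hypotheses of Theorem~\ref{Schauder} on the convex set $C$, and that the resulting $f^*$ lies in $K_0$.

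A preliminary step is to show that $C\subseteq K_0$, so that $T$ is even defined and the fixed point is a genuine positive vector. Since $K$ is convex (it is closed under non-negative linear combinations) and $\mathcal{S}\cap K\subseteq K$, we get $C=Conv(\mathcal{S}\cap K)\subseteq K$. That $0\notin C$ uses pointedness ($K\cap -K=\{0\}$): if a convex combination $\sum_i\lambda_i u_i$ of unit vectors $u_i\in K$ vanished, then $\lambda_1 u_1=-\sum_{i\ge 2}\lambda_i u_i\in K\cap(-K)=\{0\}$, forcing $u_1=0$, a contradiction. Hence $C\subseteq K_0$, and strict $K$-positivity yields $\mathcal{L}(C)\subseteq K_0$, so the denominator in $T$ never vanishes.

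The core of the argument is the interplay between the two roles of the hypothesis that $\mathcal{L}(C)$ is relatively compact \emph{in $\mathcal{X}_0$}. First, because $\overline{\mathcal{L}(C)}$ is a compact subset of $\mathcal{X}_0=\mathcal{X}\setminus\{0\}$, the continuous function $g\mapsto\|g\|$ attains a strictly positive minimum on it; this uniform lower bound on $\|\mathcal{L}(f)\|$ is exactly what makes the normalisation $N\,:\,g\mapsto g/\|g\|$ (and hence $T=N\circ\mathcal{L}$) continuous on $C$. Second, $\overline{\mathcal{L}(C)}$ is compact and, since $K$ is closed, contained in $K_0$; applying the continuous normalisation $N$ to it produces a compact set $N(\overline{\mathcal{L}(C)})\subseteq\mathcal{S}\cap K\subseteq C$ that contains $T(C)$. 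Thus $T(C)$ is contained in a compact subset of $C$. Since $\mathcal{X}$ is a Hausdorff topological vector space and $C$ is non-empty (as $K\ne\{0\}$) and convex, Theorem~\ref{Schauder} applies and yields a fixed point $f^*\in C\subseteq K_0$, whence $\mathcal{L}(f^*)=\lambda f^*$ with $\lambda=\|\mathcal{L}(f^*)\|>0$ and $f^*\in K$.

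I expect the main obstacle to be the careful handling of the normalisation: everything hinges on extracting a uniform positive lower bound for $\|\mathcal{L}(f)\|$ over $f\in C$, which is precisely why the hypothesis demands relative compactness in $\mathcal{X}_0$ rather than merely in $\mathcal{X}$, so that $0$ is excluded from the closure. The remaining verifications --- convexity and non-emptiness of $C$, closedness of $K$, continuity of $T$, and the identification $\lambda=\|\mathcal{L}(f^*)\|$ --- are routine once this point is secured.
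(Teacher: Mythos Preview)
Your proof is correct and follows essentially the same approach as the paper: define the normalised map $f\mapsto\mathcal{L}(f)/\|\mathcal{L}(f)\|$ on $C$, verify the hypotheses of Schauder's theorem (Theorem~\ref{Schauder}), and read off the positive eigenvalue $\|\mathcal{L}(f^*)\|$ from the fixed point $f^*\in C\subseteq K_0$. Your treatment of why $0\notin C$ (via pointedness of $K$) and of the uniform positive lower bound on $\|\mathcal{L}(f)\|$ that makes the normalisation continuous is more explicit than the paper's, but the strategy is identical.
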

\begin{proof}
First note that $C$ is bounded since it is a convex hull of a bounded set. Also note that $C\subset K_0$.
We define a map $\Phi\,:\, C\to C$ as $\Phi=\mathcal{P}\circ\mathcal{L}$, where $\mathcal{P}(k)=\frac{k}{||k||}$, for $k\in K_0$. $\mathcal{L}$ is continuous on $\mathcal{X}$ and $\mathcal{P}$ is continuous on $\mathcal{X}_0$ therefore $\Phi$ is continuous on $\mathcal{X}_0$. Since $\Phi(C)$ is relatively compact in $\mathcal{X}_0$, $\overline{\Phi(C)}$ is compact, moreover $\Phi(C)\subseteq\overline{\Phi(C)}\subseteq (\mathcal{S}\cap K)\subseteq C$. We apply Theorem \ref{Schauder} to the map $\Phi$. A fixed point $x\in C$ of this map $\Phi$ in turn implies the existence of a fixed-ray $R=\left\{\alpha\,x\,\vert\,x\in (\mathcal{S}\cap K),\,\alpha\ge 0\right\}$ of the  operator $\mathcal{L}$. On this ray $R$ the operator $\mathcal{L}$ acts as a multiplication operator with a positive constant $||\mathcal{L}(x)||$, i.e. $\mathcal{L} x=||\mathcal{L}(x)|| x$. Hence $||\mathcal{L}(x)||$ is a positive eigenvalue of $\mathcal{L}$ with a corresponding positive eigenvector which is any (non-zero) element of the fixed-ray $R$ (hence it is contained in $K$).
 \qquad\end{proof}

Note that strict positivity is necessary even in finite dimension. For instance, consider the matrix $L=\begin{pmatrix}
0 & 0 \\ 1 & 0 \end{pmatrix} $, with $K=\mathbb{R}^2_+$. Then $L$ is positive but not strictly positive, and it is nilpotent.

We also note that the Krein-Rutman theorem asserts that if $K-K$ is dense in $\mathcal{X}$ and 
$\mathcal{L}$ is a compact and $K$-positive operator such that $r(\mathcal{L})>0$ then the spectral radius has a corresponding eigenvector from $K$.
Note that Theorem \ref{theorem1} implies that the spectral radius is positive, and that $r(\mathcal{L})$ is a positive spectral value, but not necessarily an eigenvalue. On the other hand the main corollary of Theorem \ref{theorem1} is that $r(\mathcal{L})>0$ and it also applies to cases where $K-K$ is not dense in $\mathcal{X}$.

\section{Fixed point problem formulated on the state space}
Let us recall here the general formulation \eqref{problemss} of the steady state problem. 
\begin{equation*}
\frac{d u}{d t}=0=\mathcal{A}_{\bf u}\, u,\quad E(u)={\bf u}\in\mathcal{Y}_+;\quad 0\ne u\in\mathcal{X}_+.
\end{equation*}
To summarise the developments in the previous sections we note that our general strategy is to decouple the steady state problem into a linear problem, which essentially amounts to assuring the existence of a positive eigenvector corresponding to the zero eigenvalue of the linear operator $\mathcal{A}_{\bf u}$; and a nonlinear problem, that is guaranteeing the existence of a fixed point of a nonlinear map. Clearly, most of the difficulties arise in the second part of the problem. Note that the first problem was formulated on the state space $\mathcal{X}$, while the  second, the  nonlinear problem, was formulated on the parameter space $\mathcal{Y}$. There are good reasons why it is natural to define a fixed point map in the parameter space. First, it is relatively straightforward to impose conditions on the model ingredients, which guarantee the existence of a "nice" zero level set, i.e. where the spectral bound of the generator $\mathcal{A}_{\bf u}$ equals zero. Also the dimension of the range of the environmental operator $E$ determines the dimension of the nonlinearity. 

It is worth discussing though, that equivalently, we may formulate a fixed point problem on the state space $\mathcal{X}$. This approach, as we will see, works well for cases when the semigroup generated by $\mathcal{A}_{\bf u}$ is not irreducible but the spectral bound function is monotone along positive rays. To illustrate this approach first in a relatively simple case assume that for every ${\bf u}$ the semigroup generated by $\mathcal{A}_{\bf u}$ is irreducible, and that the spectral bound function ${\bf u}\to s(\mathcal{A}_{\bf u})$ is monotone along positive rays in $\mathcal{Y}$. This is for example the case of the semilinear partial differential equation we discussed in \cite{CF}. Since we would like to show the existence of a positive steady state we further assume that $s(\mathcal{A}_{\bf 0})>0$, and that there exists an $R>0$ such that $s(\mathcal{A}_{\bf u})<0$ for $||{\bf u}||\ge R$. 
Note that this condition is exactly the same as we imposed both in Theorem \ref{irreducible} and Theorem \ref{nonirreducible}. Further assume that $E$ is a positive, bounded linear operator, and that $\mathcal{X}$ is an AL-space (for example the Lebesgue space $L^1(0,m)$, as in \cite{CF}). Let us denote by $\mathcal{S}$ the unit sphere of $\mathcal{X}$, and let $\mathcal{S}_+=\mathcal{S}\cap\mathcal{X}_+$. It is shown that if $\mathcal{X}$ is an AL-space then $\mathcal{S}_+$ is convex. We can define a continuous nonlinear map $T$ from $\mathcal{S}_+$ into itself. To this end we define a map $\eta\,:\, \mathcal{S}_+\to \eta(\mathcal{S}_+)\subset \mathcal{X}_+$ as follows: for every $x\in \mathcal{S}_+$ let $\eta(x)=cx$, where $0<c$ is the unique real number such that $s(\mathcal{A}_{[c\cdot E(x)]})=s(\mathcal{A}_{[E(cx)]})=0$. The existence of such $c$ follows from the fact that the spectral bound changes sign along every positive ray in $\mathcal{Y}$, while the uniqueness of $c$ follows from the monotonicity of the spectral bound and the linearity of $E$. Note that $\eta$ is a projection along rays, in fact it is shown that $\eta$ is continuous. The definition of the nonlinear map $T$ is illustrated in the following diagram, where $P$ and $F$ are as before in Section 2, i.e. $F$ gives the normalised positive eigenvector corresponding to the spectral bound of the generator 
$\mathcal{A}_{[E(p)]}$.
\begin{equation}
T\,:\,\underbrace{x}_{\in \mathcal{S}_+}\xrightarrow{\eta}\underbrace{p}_{\in \mathcal{X}_+}
\overset{E}{\to}\underbrace{E(p)}_{\in \mathcal{Y}_+}
\xrightarrow{P}\underbrace{\mathcal{A}_{[E(p)]}}_{\in \mathcal{C(X)}}\overset{F}{\to}\underbrace{x'}_{\in \mathcal{S}_+}.
\end{equation}
$T(\mathcal{S}_+)$ is contained in a compact subset of $\mathcal{S}_+$ if for example the normalised eigenvectors of $\mathcal{A}_{[E(p)]}$ have appropriate regularity properties, e.g. because they belong to the domain of $\mathcal{A}_{[E(p)]}$. Then, Theorem \ref{Schauder} implies the existence of a fixed point of the map $T$. Note that if $x^*$ is a fixed point of $T$ then $\eta(x^*)$ is a positive solution of problem \eqref{problemss}.  

We are of course interested whether this approach works for example in the case when the semigroup generated by $\mathcal{A}_{\bf u}$ is not irreducible. It appears that in this case (but still assuming monotonicity of the spectral bound along positive rays) the only difference is that $F$ may be set valued, but the range of $F$ (defined as the generalised eigenspace intersected with the positive cone and the unit sphere) is non-empty, convex and closed for any $\mathcal{A}_{[E(p)]}$, where $p\in \eta(\mathcal{S}_+)$. 
Without elaborating the details we note that to establish existence of a fixed point of the map $T$ we may apply Himmelberg's theorem, which we recall below from \cite{H} for the readers convenience.
\begin{theorem}\label{him}
Let $U$ be a non-empty subset of a separated (i.e. Hausdorff) locally convex topological space $V$. Let $f\,:U\to U$ an upper hemicontinuous set-valued map, such that $f(v)$ is closed and convex for all $v\in V$, and $f(U)$ is contained in some compact subset $C$ of $U$. Then $f$ has a fixed point. 
\end{theorem}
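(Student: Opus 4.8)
The plan is to deduce Theorem~\ref{him} from the finite-dimensional Kakutani theorem by the standard compactness-and-approximation device, following essentially \cite{H}. The point is that although $U$ is assumed only convex (neither compact nor closed), the whole range of $f$ is confined to the compact set $C\subseteq U$; finite-dimensional polytopes with vertices chosen in $C$ therefore remain inside $U$ by convexity and are automatically compact, so one never needs compactness of $U$ itself or of $\overline{\mathrm{conv}}(C)$.

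First I would set $K=\overline{f(U)}$. Since $f(U)\subseteq C$ with $C$ compact in the Hausdorff space $V$, $K$ is a compact subset of $C\subseteq U$, and each value $f(v)$, being a closed subset of $C$, is compact. Let $\mathcal{N}$ be a base of convex symmetric open $0$-neighbourhoods of $V$, directed by reverse inclusion so that the net runs along ever smaller $N$. For fixed $N\in\mathcal{N}$, compactness lets me pick $a_1,\dots,a_m\in K\subseteq C$ with $K\subseteq\bigcup_i(a_i+N)$; I set $\Delta_N=\mathrm{conv}\{a_1,\dots,a_m\}$ and take a continuous partition of unity $\{\beta_i\}$ subordinate to $\{a_i+N\}$. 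Because the $a_i$ lie in the convex set $U$, $\Delta_N$ is a compact convex subset of $U$ in a finite-dimensional subspace, and $p_N\colon K\to\Delta_N$, $p_N(y)=\sum_i\beta_i(y)\,a_i$, is continuous and near-identity, with $p_N(y)-y\in N$ for all $y\in K$ by symmetry and convexity of $N$.

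Next I would push $f$ down to $\Delta_N$ by $f_N\colon\Delta_N\multimap\Delta_N$, $f_N(x)=\mathrm{conv}\bigl(p_N(f(x))\bigr)$. As $p_N$ is single-valued continuous, $f$ is u.h.c.\ with nonempty compact values in $K$, and closed convex hulls of compact finite-dimensional sets are compact and convex, $f_N$ is an upper hemicontinuous self-map of the compact convex finite-dimensional set $\Delta_N$ with nonempty closed convex values. The finite-dimensional Kakutani theorem then yields $x_N\in f_N(x_N)$, i.e.\ $x_N\in\mathrm{conv}\bigl(p_N(f(x_N))\bigr)$.

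Finally I would pass to the limit. The net $(x_N)_{N\in\mathcal{N}}$ lies in the compact set $K$, so a subnet converges to some $x^\ast\in K\subseteq U$, and I claim $x^\ast\in f(x^\ast)$. If not, then as $f(x^\ast)$ is closed and convex and $V$ is locally convex Hausdorff, there is a continuous linear functional $\phi$ and a constant $c$ with $\phi(x^\ast)<c<\phi(z)$ for all $z\in f(x^\ast)$; by upper hemicontinuity $f(x)\subseteq\{\phi>c\}$ for $x$ near $x^\ast$. Far enough along the subnet $x_N$ lies in that neighbourhood, and choosing $N$ small enough that $|\phi|<\varepsilon$ on $N$, the relation $x_N=\sum_j\lambda_j\,p_N(z_j)$ with $z_j\in f(x_N)$ together with the linearity of $\phi$ and $p_N(z_j)-z_j\in N$ gives $\phi(x_N)>c-\varepsilon$; running the net forces $\phi(x^\ast)\ge c$, a contradiction. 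I expect this limiting step to be the only genuine obstacle: one has merely upper (not lower) hemicontinuity, so the passage from the approximate inclusions $x_N\in\mathrm{conv}(p_N(f(x_N)))$ to $x^\ast\in f(x^\ast)$ cannot be done by naive continuity and must be routed through the separation argument, which simultaneously absorbs the convex combinations (via linearity of $\phi$) and the near-identity projections (via $|\phi|<\varepsilon$ on $N$).
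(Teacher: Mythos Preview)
The paper does not prove Theorem~\ref{him}; it merely recalls the statement from Himmelberg's paper \cite{H} for the reader's convenience, so there is no in-paper argument to compare yours against. Your approach is the standard approximation-by-finite-polytopes route and is correct in outline, but there is one genuine slip in the limiting step.

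You assert that the net $(x_N)$ lies in the compact set $K=\overline{f(U)}$. This is not justified: $x_N\in f_N(x_N)\subseteq\Delta_N=\mathrm{conv}\{a_1,\dots,a_m\}\subseteq\mathrm{conv}(K)$, and $K$ need not be convex. Without a fixed compact set containing all $x_N$ you cannot extract a convergent subnet, so the passage to the limit stalls before your separation argument ever gets started. The fix is short and uses precisely the convexity of the values $f(x_N)$: from $x_N=\sum_j\lambda_j\,p_N(z_j)$ with $z_j\in f(x_N)$, set $w_N:=\sum_j\lambda_j z_j\in f(x_N)\subseteq K$, and note $x_N-w_N=\sum_j\lambda_j\bigl(p_N(z_j)-z_j\bigr)\in N$ by convexity of $N$. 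Now $(w_N)$ does lie in the compact set $K$, so a subnet converges to some $x^\ast\in K\subseteq U$, and $x_N\to x^\ast$ along that subnet as well since $x_N-w_N\in N$. From $w_N\in f(x_N)$, $x_N\to x^\ast$, $w_N\to x^\ast$ and the closed-graph property of an u.h.c.\ map with closed values into a compact set, you obtain $x^\ast\in f(x^\ast)$ directly; your separation argument then becomes optional, though it also works once convergence of $x_N$ is secured.

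One side remark: the paper's statement omits the hypothesis that $U$ be convex, which you correctly assume and use when concluding $\Delta_N\subseteq U$. Himmelberg's original statement in \cite{H} includes convexity, and without it the theorem fails, so this is presumably a slip in the paper rather than in your reading.
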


To apply Theorem \ref{him} one needs to show that $T$ is upper hemicontinuous and that $T(\mathcal{S}_+)$ is contained in a compact subset of $\mathcal{S}_+$. The second condition may be verified for concrete applications using regularity properties of the eigenvectors of the generator. The upper hemicontinuity of $T$ may be verified (similarly as in Lemma \ref{hemicont}) at least in the special case when for every $x\in \mathcal{S}_+$ the geometric multiplicity of $s(\mathcal{A}_{[E(\eta(x))]})$ equals its algebraic multiplicity and the corresponding eigenspace has a basis of non-negative eigenvectors.

\section{Applications}

In this section we consider some model examples to illustrate the
abstract results in the previous section. The examples we consider here 
are nonlinear differential equation models of physiologically structured po\-pu\-lations. 

\subsection{Juvenile-adult model}
The first example we
discuss is a structured juvenile-adult model, recently investigated
for example in \cite{FH2}. Let $p(s,t)$ denote the density of
individuals of size (or another appropriate physiological structuring
variable) $s$ at time $t$. We assume that individuals enter the
adult stage at the fixed size $l$, and we assume that there is a
finite maximal size $m$. Hence the juvenile and adult population sizes at every
time $t$ are given by 
\begin{equation*}
J(t)=\int_0^l p(s,t)\,d s,\quad A(t)=\int_l^m p(s,t)\,d s,
\end{equation*}
respectively. We write a single
equation which governs the dynamics of the whole population with the
understanding that $\mu(s,\cdot,\cdot)$ and $\gamma(s,\cdot,\cdot)$ denote juvenile mortality and growth rate 
for $0\le s\le l$, while they denote adult mortality and growth rate for $l\le s\le
m$, respectively.  We also assume that only adults reproduce, hence the fertility function $\beta$ is supported 
over the size-interval $[l,m]$. With these model ingredients the governing equation reads
\begin{equation}\label{ja}
p_t(s,t)+(\gamma(s,J(t),A(t))\,p(s,t))_s=-\mu(s,J(t),A(t))\,p(s,t),
\end{equation}
which is defined for $0\le s \leq m<\infty$ and $t>0$, and it is subject to the
boundary condition
\begin{equation}\label{jaboundc}
\gamma(0,J(t),A(t))p(0,t)=\int_l^m\beta(s,J(t),A(t))\,p(s,t)\,ds, \quad t>0,
\end{equation}
and an initial condition of the form
\begin{equation}\label{jainc}
    p(s,0)=p_0(s), \quad 0\leq s\leq m.
\end{equation}
We define the operator
\begin{align}
\Psi_{(J,A)}\,p & =-(\gamma(\cdot,J,A)p(\cdot))_s-\mu(\cdot,J,A)p(\cdot), \label{jaop1} \\
D(\Psi_{(J,A)}) & =\{p\in W^{1,1}(0,m)\,|\,
p(0)=B_{(J,A)}(p)\}, \label{jaop2}
\end{align}
where
\begin{equation} \label{jaop3}
B_{(J,A)}(p)=\frac{1}{\gamma(0,J,A)}\int_l^m\beta(s,J,A)p(s)\,d
s, \quad D(B_{(J,A)})=L^1(l,m).
\end{equation}
It can be shown that $\Psi_{(J,A)}$ generates a positive and eventually compact semigroup for every $(J,A)\in\mathbb{R}^2_+$,
which is irreducible if there exists an $\varepsilon>0$ such that 
\begin{equation}\label{betairreducible}
\displaystyle\int_{m-\varepsilon}^m\beta(s,\cdot,\cdot)\,d s>0,
\end{equation} 
see e.g. \cite{FGH}. The (positive linear) environmental operator $E$ is defined as
\begin{equation*}
E(p)=\left(\int_0^lp(s)\,d s,\int_l^m p(s)\,d s\right)^t.
\end{equation*}

Let us assume that $\beta$, $\gamma$ and $\mu$ are
non-negative continuous functions of all of their arguments and that
$\gamma$ is bounded away from $0$, and let us consider the operator
$\Psi_E$, where $E=(J,A)$ as given by \eqref{jaop1}-\eqref{jaop3}.
For a given $E \in \mathbb{R}^2_+$, we shall prove in the Appendix
that $\Psi_{E_n}$ tends in the generalised sense to $\Psi_E$
whenever $E_n$ tends to $E$.

It follows from Theorem \ref{irreducible} that if $\beta$ satisfies \eqref{betairreducible}, and there exist po\-si\-ti\-ve real numbers $r,R$ such that $0<r<R<\infty$ and $s(\Psi_{(J,A)})>0$ for $J+A\le r$, and $s(\Psi_{(J,A)})<0$ for $J+A\ge R$ then model \eqref{ja}-\eqref{jainc} admits a positive steady state. Note that it is natural to assume that $\beta$ is strictly positive since the adult class consists of reproductive individuals by definition. The conditions on the spectral bound of the operator $\Psi_{(J,A)}$ are also natural.

{\sc Remark} \, We would like to note that the juvenile-adult model above can be treated in a different (perhaps more natural) way. It was shown in Proposition\,1 in \cite{FH2} that there is a one-to-one correspondence between positive steady states of the juvenile-adult model \eqref{ja}-\eqref{jainc} and pairs of positive numbers $(J_*,A_*)$ satisfying the following two scalar equations
\begin{equation}
{{J_*}\over {A_*}}= {\displaystyle{\int_0^l \exp\left\{-\int_0^s\displaystyle{\frac{\mu(r,J_*,A_*)
        +\gamma_s(r,J_*,A_*)}
        {\gamma(r,J_*,A_*)}}\,dr\right\}\,ds}\over
        \displaystyle{\int_l^m \exp\left\{-\int_0^s\displaystyle{\frac{\mu(r,J_*,A_*)+\gamma_s(r,J_*,A_*)}
        {\gamma(r,J_*,A_*)}}\,dr\right\}\,ds}},\quad  R(J_*,A_*)=1,\label{JAcond} 
\end{equation}    
where    
\begin{align}
& R(J,A)= \int_l^m\displaystyle{\frac{\beta(s,J,A)}{\gamma(s,J,A)}}
    \,\exp\left\{-\int_0^s\displaystyle{\frac{\mu(r,J,A)}{\gamma(r,J,A)}}\,dr\right\}\,ds.\label{Rdef}
\end{align}
Hence we may define a nonlinear multi-valued map, via the right hand-side of the first equation in \eqref{JAcond} (where the first and second component of the map are determined by the numerator and the denominator, respectively), on the positive quadrant, and apply Lemma \ref{fixedpointirred} and Lemma \ref{levelset}, with the level set $Z$ in Lemma \ref{levelset} defined via  $R(J_*,A_*)=1$, to this map. Then, essentially  the same conditions as in Theorem \ref{irreducible}, now formulated in terms of the density dependent net reproduction function $R$ defined in \eqref{Rdef}, will imply the existence of a positive steady state. In fact, the sufficient conditions on $R$ are the biologically relevant ones, i.e. that $R(0,0)>1$ and $R(J,A)<1$ for $J+A>\bar{R}$ for some $\bar{R}>0$. Note that these are the conditions equivalent to the ones on $f$ in Lemma \ref{levelset}.

\subsection{Consumer-resource model}

Next we turn our attention to a structured consumer-resource model. These types of models are frequently discussed in the literature, see e.g. \cite{dRP,DGMNR,FH1, FM, HT}. The reason behind this is that solutions are shown to exhibit periodic oscillations. This is because of the negative feedback due to consumption of the resource by the predator. We let $p(s,t)$ to denote the density of the consumer individuals of size $s$ at time $t$. Then $P(t)=\int_0^mp(s,t)\,d s$ is the consumer population size at time $t$, while $Q(t)$ denotes the total population size of the resource at time $t$. We consider the following model, see e.g. \cite{dRP,DGMNR,FH1}.
\begin{align}
& p_t(s,t)+(\gamma(s,P(t),Q(t))p(s,t))_s=-\mu(s,P(t),Q(t))p(s,t),  \label{cr1} \\
& \gamma(0,P(t),Q(t))p(0,t)=\int_0^m\beta(s,P(t),Q(t))p(s,t)\,d s,  \label{cr2} \\
& \frac{d Q(t)}{d t}=Q(t)f(Q(t))-\int_0^m F(s,P(t),Q(t))p(s,t)\,d s, \label{cr3}
\end{align}
with suitable initial conditions: $p(0,t)=p_0(s)$ and $Q(0)=Q_0$. We
assume a finite maximal size $m$ for the consumer population. The
fertility, mortality and growth rates, $\beta,\mu$ and $\gamma$ of
the consumers depend on their respective size and
on the consumer and resource population sizes. $f$ is a continuous
function which describes the dynamics of the resource if it is left
to its own devices, e.g. $f(Q)=(r-Q)$. Later on 
we will impose some natural assumptions on $f$ to guarantee the existence of a positive steady state. 
$F$ denotes the size-dependent feeding rate of the consumers. 

To apply the spectral theoretic framework one would naturally define a parametrised family of operators as
\begin{align}\label{consres-phidef}
& \Phi_{(P,Q)}{\bf v}= \begin{pmatrix}  -\left(\gamma(\cdot,P,Q)v(\cdot)\right)_s-\mu(\cdot,P,Q)v \\
Vf(Q)-\int_0^m F(s,P,Q)v(s)\,d s
\end{pmatrix},\quad  (P,Q)\in\mathbb{R}^2_+,
\end{align}
where ${\bf v}=(v,V)\in D(\Phi_{(P,Q)})=\{{\bf v}\in
W^{1,1}(0,m)\times\mathbb{R}\,|\,
v(0)=B_{(P,Q)}(v)\}$, and
\begin{equation}\label{defB}
B_{(P,Q)}(v)=\frac{1}{\gamma(0,P,Q)}\int_0^m\beta(s,P,Q)v(s)\,d
s,\quad D(B_{(P,Q)})=L^1(0,m).
\end{equation}
It turns out however, that $\Phi_{(P,Q)}$ does not generate a positive semigroup. 
In fact, we claim that defining the parametrised family of operators in a natural way would not yield generators of positive operators.

The existence of a positive steady state can be established, similarly as for the juvenile-adult model above, by directly applying Lemmas \ref{fixedpointirred}-\ref{levelset}. In particular, 
in \cite{FH1} we showed that the problem of the existence of
non-trivial steady states of \eqref{cr1}-\eqref{cr3} is equivalent
to the question of existence of non-trivial solutions of a system of
two non-linear equations, which involve some kind of net
reproduction rates. To formulate conditions to guarantee the
existence of such solutions was left as an open problem. More precisely, we proved in Proposition 1.1 in \cite{FH1} that there is a one-to-one correspondence between positive steady states of the consumer-resource model  \eqref{cr1}-\eqref{cr3} and pairs of positive numbers $(P_*,Q_*)$ satisfying the following two scalar equations
\begin{align}
\frac{P_*}{Q_*}= {f(Q_*)\displaystyle{\int_0^m \exp\left\{-\int_0^s\displaystyle{\frac{\gamma_s(r,P_*,Q_*)+\mu(r,P_*,Q_*)}
        {\gamma(r,P_*,Q_*)}}\,dr\right\}\,ds}\over
        \displaystyle{\int_0^m F(s,P_*,Q_*) \exp\left\{-\int_0^s\displaystyle{\frac{\gamma_s(r,P_*,Q_*)+\mu(r,P_*,Q_*)}
        {\gamma(r,P_*,Q_*)}}\,dr\right\}\,ds}},\quad  R(P_*,Q_*)=1,\label{PQcond} 
\end{align}    
where    
\begin{align}
& R(P,Q)= \int_0^m\displaystyle{\frac{\beta(s,P,Q)}{\gamma(s,P,Q)}}
    \,\exp\left\{-\int_0^s\displaystyle{\frac{\mu(r,P,Q)}{\gamma(r,P,Q)}}\,dr\right\}\,ds.\label{R2def}
\end{align}
Hence we may define a nonlinear multi-valued map, via the right hand-side of the first equation in \eqref{PQcond} (where the first and second component of the map are determined by the numerator and the denominator, respectively), on the positive quadrant, and apply Lemma \ref{fixedpointirred} and Lemma \ref{levelset}, with the level set $Z$ in Lemma \ref{levelset} defined via  $R(P_*,Q_*)=1$, to this map. Then, essentially  the same conditions as in Theorem \ref{irreducible}, now formulated in terms of the density dependent net reproduction function $R$ defined in \eqref{R2def}, will imply the existence of a positive steady state.

\subsection{Early human population model}
The next partial differential equation model we discuss arises from very recent developments to model the dynamics of the age-structured population of early humans, see \cite{BW,WG}. We consider a relatively simple equation, which still exhibits  
an essential feature, namely it is not governed by an irreducible semigroup. 
The governing equation reads:
\begin{equation}\label{human}
p_t(a,t)+p_a(a,t)=-(f(a)+\eta(a)T(t)+\mu(a)S(t))p(a,t),
\end{equation}
defined for $0<a<a_{max}<\infty$, and $t>0$. 
It is assumed that individuals have three different life stages: non-reproducing juvenile, reproducing adult, and non-reproducing senescent. Juveniles enter the reproducing adult stage at the fixed age $a_j$, and become infertile again upon reaching the fixed age $a_r$. Hence the senescent and total population sizes are given by:
\begin{equation*}
S(t)=\int_{a_r}^{a_{max}}p(a,t)\,d a,\quad T(t)=\int_0^{a_{max}}p(a,t)\,d a,
\end{equation*}
respectively. $f$ denotes the age-dependent natural mortality, $\eta(\cdot)T(t)$ denotes the extra mortality due to crowding/competition effects among all the individuals, while $\mu(\cdot)S(t)$ denotes the extra mortality due to the presence of senescent individuals (in [31] $\mu$ is assumed to vanish for  $a>a_j$ modelling an extra competitive pressure exerted by the senescent population on the non reproducing individuals). The extra mortality pressure on juveniles induced by the senescent population is due to limitation in resources, for example food. The influx of individuals at any time $t$ is determined by the fertility rate $\beta$, and the standing population, via the boundary condition:
\begin{equation}\label{humanboundc}
p(0,t)=\int_{a_j}^{a_r}\beta(a)\,p(a,t)\,d a, \quad t>0.
\end{equation}
We impose an initial condition of the form
\begin{equation}\label{humaninc}
    p(a,0)=p_0(a), \quad 0\leq a\leq a_{max}.
\end{equation}
We refer to \cite{WG} for a detailed analysis of the model (and in fact of a more general one). 
Our aim here is to illustrate our abstract results from the previous sections. 
We define the operator $\Psi$ as:
\begin{align}
\Psi_{(S,T)}\,p & =-p_a(\cdot)-(f(\cdot)+\eta(\cdot)T+\mu(\cdot)S)p(\cdot), \label{humanop1} \\
D(\Psi_{(S,T)}) & =\{p\in W^{1,1}(0,a_{max})\,|\, p(0)=B(p)\}, \label{humanop2}
\end{align}
where
\begin{equation*}
B(p)=\int_{a_j}^{a_r}\beta(a)p(a)\,d a, \quad D(B)=L^1(0,a_{max}).
\end{equation*}
It can be shown that for every $(S,T)\in\mathbb{R}^2_+$ the operator $\Psi_{(S,T)}$ generates a positive and eventually compact semigroup, which is not irreducible. The (positive, linear and bounded) environmental operator is defined as:
\begin{equation*}
E(p)=\left(\int_{a_r}^{a_{max}}p(a)\,d a,\int_0^{a_{max}}p(a)\,d a\right)^t.
\end{equation*} 
The spectral bound function $\sigma\, :\,(S,T)\to s\left(\Psi_{(S,T)}\right)$ is monotone decreasing in both 
variables, and it is natural to assume that $s\left(\Psi_{(0,0)}\right)>0$. 
It can be shown that $s\left(\Psi_{(S,T)}\right)<0$, for $S+T$ large enough. 

Despite the non-irreducibility of the semigroup generated by $\Psi_{(S,T)}$, it is shown by direct computation that the geometric and algebraic multiplicity of the spectral bound $s(\Psi_{(S^*,T^*)})=0$ equals $1$, for any $(S^*,T^*)\in Z$. We are under the hypotheses of Lemma \ref{hemicont}. Therefore Theorem \ref{nonirreducible} implies the existence of a positive steady state.

\subsection{Selection-mutation model}
The following general selection-mutation mo\-del for the dynamics of an age and age at maturity structured population was introduced and investigated very recently in \cite{CP}. 
\begin{align}
& \frac{\partial u}{\partial t}(l,a,t)+\frac{\partial u}{\partial a}(l,a,t)=-\mu(E[u],l,a)u(l,a,t),\nonumber \\
& u(l,0,t)=\int_0^\infty\int_{\hat{l}}^\infty b(l,\hat{l})\beta(E[u],\hat{l},a)u(\hat{l},a,t)\, da\,d\hat{l}, \label{selmut} \\
& u(l,a,0)=u_0(l,a). \nonumber
\end{align}
In the model above $u(l,a,t)$ denotes the density of individuals of age $a$ and maturation age $l$ at time $t$. $E$ is the environmental operator, as described in Section 1. $\mu$ denotes the mortality rate, $\beta$ the fertility function and 
$b$ the probability density function describing the mutation, that is $\int_{l_1}^{l_2} b(l,\hat{l})\, dl$ is the probability that the offspring of an individual of age $a$ at maturity $\hat{l}$ has maturity age $l\in (l_1,l_2)$. 
 
In \cite{CP} the existence and uniqueness of a positive steady state of model \eqref{selmut} was established, when the mortality $\mu$ is a strictly monotone increasing function of its first variable, while the fertility $\beta$ is a strictly monotone decreasing function of its first variable, as well. The result was established by formulating the positive steady state problem as an eigenvalue problem for a bounded integral operator, and studying spectral properties of that integral operator.

Here we apply our theory developed in the previous sections in a special case of model \eqref{selmut}, but without the crucial monotonicity assumptions on the mortality and fertility functions. We assume that individuals have a finite maximal age denoted by $a_m$ and that the range of the environmental operator is contained in $\mathbb{R}^2$. More precisely, we assume that 
\begin{equation*}
E[u(\cdot,\cdot,t)]=(P(t),Q(t))^t,
\end{equation*}
where
\begin{equation*}
P(t)=\int_0^{a_m}\int_0^lu(l,a,t)\, da\, dl,\quad Q(t)=\int_0^{a_m}\int_l^{a_m}u(l,a,t)\, da\,dl.
\end{equation*}
That is, we assume that the per capita vital rates depend on a two dimensional variable: the population of young individuals and that of adults. Hence again we have two-dimensional nonlinearities, but in contrast to the previous examples, a distributed recruitment process.

We define the operator $\Psi$ on the Banach space $\mathcal{X}=L^1((0,a_m),L^1(0,a_m))\cong L^1((0,a_m)\times(0,a_m))$ 
as follows
\begin{align}
\Psi_{(P,Q)}\, u & = -u_a(\cdot,\cdot)-m(P,Q,\cdot,\cdot)u(\cdot,\cdot), \label{selmutop1} \\
D(\Psi_{(P,Q)}) & =\{u\in L^1\left((0,a_m), W^{1,1}(0,a_m)\right)\,|\, u(\cdot,0)= B(u)\}, \label{selmutop2}
\end{align}
where
\begin{equation*}
B_{(P,Q)}(u)=\int_0^{a_m}\int_{\hat{l}}^{a_m}b(\cdot,\hat{l})\beta(P,Q,\hat{l},a)u(\hat{l},a)\, da\, d\hat{l},
\end{equation*}
is a bounded operator on $\mathcal{X}$ taking values in $L^1(0,a_m)$. 

To see that $\Psi_{(P,Q)}$ generates a positive semigroup for any $P,Q\ge 0$ we shall invoke the Lumer-Phillips Theorem (see e.g. \cite[Ch.II Th.3.15]{NAG}). To this end we need to show that for some large enough $\kappa>0$ the densely defined operator $\Psi_{(P,Q)}-\kappa\,\mathcal{I}$ is dissipative, and that the range condition holds true, that is rg$\left(\lambda\,\mathcal{I}-(\Psi_{(P,Q)}-\kappa\,\mathcal{I})\right)$ is dense in $L^1\left((0,a_m), L^1(0,a_m)\right)=\mathcal{X}$.
The dissipativity calculation is rather cumbersome and lengthy but relatively straightforward hence we do not include it here. 
Instead, we refer the reader for example to \cite{FGH}, where such a dissipativity calculation was carried out for a similar semigroup generator. To show that the range condition holds true we note for any $f\in \mathcal{X}$ the solution of the equation
\begin{equation}
\left(\lambda\,\mathcal{I}-\Psi_{(P,Q)}\right)u=f
\end{equation}
is 
\begin{equation}
u(\cdot,a)=e^{-\int_0^a(\mu(P,Q,\cdot,r)+\lambda)dr}\left(u(\cdot,0)+\int_0^a e^{\int_0^x(\mu(P,Q,\cdot,r)+\lambda)dr}f(\cdot,x)\, dx \right),\label{selmutressol}
\end{equation}
where $u(\cdot,0)$ satisfies the following equation
\begin{align}
u(\cdot,0)=& B_{(P,Q)}\left(u(\cdot,0)\,e^{-\int_0^a(\mu(P,Q,\cdot,r)+\lambda)dr}\right) \nonumber \\ 
+ & B_{(P,Q)}\left(\int_0^a e^{-\int_x^a(\mu(P,Q,\cdot,r)+\lambda)dr}f(\cdot,x)\, dx \right).\label{selmutressol2}
\end{align}
We define a bounded linear operator (for every $(P,Q)$, $\lambda>0$) mapping $L^1(0,a_m)$ into $\mathcal{X}$ by
\begin{equation*}
\left(S_{(P,Q)}\, v\right) (l,a)= e^{-\int_0^a(\mu(P,Q,l,r)+\lambda)dr}\, v(l).
\end{equation*}
This allows us to write equation \eqref{selmutressol2} as
\begin{equation}
\left(\mathcal{I}-B_{(P,Q)}S_{(P,Q)}\right) u(\cdot,0)=B_{(P,Q)}\left(\int_0^a e^{-\int_x^a(\mu(P,Q,\cdot,r)+\lambda)dr}f(\cdot,x)\, dx \right).\label{selmutressol3}
\end{equation}
It is shown that 
\begin{equation}
\left|\left|B_{(P,Q)}\,S_{(P,Q)}\right|\right|\le \left|\left|B_{(P,Q)}\right|\right|\,\left|\left| S_{(P,Q)}\right|\right|\le \frac{\displaystyle\sup \left\{\beta(P,Q,\cdot,\cdot)\right\}}{\lambda}.\label{selmutressol4}
\end{equation}
This shows that for $\lambda$ large enough the operator $\left(\mathcal{I}-B_{(P,Q)}\, S_{(P,Q)}\right)$ is invertible, hence for every $f\in\mathcal{X}_+$ there exists a non-negative solution of equation \eqref{selmutressol2}. Substituting this solution into \eqref{selmutressol} it is then shown by differentiating the solution $u$ in \eqref{selmutressol} with respect to $a$ that for $\lambda$ large enough we have $u\in D(\Psi_{(P,Q)})$, hence the range condition holds true. Since for any $P,Q\ge 0$, $B_{(P,Q)}$ is a positive operator it is clear that the semigroup generated by $\Psi_{(P,Q)}$ is positive.

Next we note that equation \eqref{selmutressol} shows that the solution $u(\cdot,\cdot)$ is strictly positive (i.e. $u>0$ almost everywhere) for $f\in\mathcal{X}_+$ if the recruitment operator $B_{(P,Q)}$ is strictly positive: i.e. $B_{(P,Q)}\,u>0$ almost everywhere, for $u\in \mathcal{X}_+$. This implies that the resolvent operator $R(\lambda,\Psi_{(P,Q)})$ is strictly positive, and the semigroup generated by $\Psi_{(P,Q)}$ is irreducible, see e.g. \cite{NAG}. Note that $\Psi_{(P,Q)}$ is a bounded perturbation of the generator of a translation semigroup. Since we have a finite maximal age $a_m$ this implies that the semigroup generated by $\Psi_{(P,Q)}$ is eventually compact. If $s(\Psi_{(0,0)})>0$ and there exists an $R>0$ such that $s(\Psi_{(P,Q)})<0$ for $P+Q\ge R$ then 
Theorem \ref{irreducible} implies the existence of a positive steady state. We note that the condition that the spectral bound function changes sign along positive rays in the parameter space, is very natural, and it corresponds to the conditions 
on the spectral radius of the integral operator analysed in \cite{CP}. In fact, there is a rigorous result, that is Theorem 3.5 in \cite{HT2}, which establishes the connection between the spectral bound of the operator $\Psi_{(P,Q)}$ and the spectral radius of the corresponding integral operator.

\section{Discussion}

In this paper we developed a general framework for proving existence of positive equilibria of nonlinear evolution equations. 
Our approach is based on a reformulation of the steady state equation as a parametrised family of abstract eigenvalue problems. 
Our spectral analysis relies heavily on perturbation results found in \cite{Kato}. Note that, although we asserted that the linear problems are go\-verned by strongly continuous semigroup of operators, which is the case for most of the concrete applications, from the mathematical point of view this assumption is not necessary. 
In principle, spectral properties of the operators $\mathcal{A}_{\bf u}$ in \eqref{problemss} may be 
studied without assuming that they generate semigroups. However, it proves to be convenient to work in the framework of the spectral theory of positive semigroups.  Also, as we have seen in Section 4 in some cases we may apply 
directly Lemmas \ref{fixedpointirred}-\ref{levelset} to prove existence of a positive steady state. The necessity of the 
spectral theoretic framework becomes apparent for models with infinite-dimensional nonlinearities, or for models (even with finite dimensional nonlinearities) when the existence of a positive steady state cannot be characterised explicitly via scalar equations involving the interaction variables. This is the case for example in the selection-mutation model discussed in Section 4.4, and in general in case of models with distributed states at birth. Such models were investigated recently for example in  \cite{AcklehFarkas, CF, CP, FGH}.

In Section 2.1 we discussed problems with non-monotone nonlinearities of dimension $2$.  The generalisation of these results 
for higher dimensions is an open problem and promises to be extremely difficult. This is mainly because in our fixed point theorem we made use of some geometric constraints only valid in the plane. Then in Section 2.2 we discussed problems with non-irreducible governing semigroups. In this case the spectral analysis is much more delicate. For example to establish upper hemicontinuity of the nonlinear fixed point maps we had to assume that the dimension of the generalised eigenspace of the spectral bound of the generator $\mathcal{A}_{\bf u}$ equals the dimension of its eigenspace. But we established results for equations with nonlinearities of any finite dimension $n$. Furthermore, as we briefly e\-la\-bo\-ra\-ted in Section 3 this approach can be extended to infinite dimensional nonlinearities using Himmelberg's fixed point theorem in \cite{H}, by formulating the fixed point problem on the state space. In Section 2.3 we also established a quite general spectral theoretic result, namely, non-quasinilpotency 
for a class of strictly positive operators. Note that, the fundamental result in \cite{Pagter} by de Pagter assumed both irreducibility and compactness of the operator. We impose neither of those assumptions, however we have to assume compactness 
of the image of a rather specific set.     

As we mentioned earlier, Cushing and Walker used a bifurcation theoretic approach to establish existence of positive steady states of structured population models, see e.g. \cite{C85,C85-2,W1,W2,W3}. The advantage of their approach is the clear biological motivation to use the inherent net reproduction rate (i.e. the density dependent net reproduction rate evaluated at the extinction steady state) as a bifurcation parameter. They, on the other hand, as far as we know only treated age-structured models. At the same time our main assumption in Theorems \ref{irreducible} and \ref{nonirreducible} that the spectral bound changes sign along positive rays is also very natural from the biological point of view. In fact we can clearly see the relationship between the two approaches. 
When the spectral bound of $\mathcal{A}_{\bf 0}$ becomes positive the trivial steady state looses its stability 
and a branch of non-trivial steady states bifurcates from it. On the other hand, naturally the spectral bound of $\mathcal{A}_{\bf u}$ will become negative far from the origin, due for example to limitations of resources. Hence the spectral bound function changes sign 
along positive rays, indeed.

Finally we note that for some models it is also possible to formulate directly a fixed point problem and to apply fixed point theorems in conical shells of Banach lattices, see e.g. \cite{FH,W2}. This approach however, requires the implicit solution of the steady state equation, which cannot be obtained for most models, for example for the ones with distributed recruitment processes, see e.g. \cite{AcklehFarkas, CF,CP,FGH}.

\section{Appendix}\label{app} 
Note that in Section 2 we recalled from \cite{Kato} and applied the notion of generalised convergence of a sequence of operators. In particular, we hypothesized in Theorems \ref{irreducible} and \ref{nonirreducible} 
that sequences of operators converge in the ge\-ne\-ra\-lised sense. Here we prove that this is actually the case 
for the operators arising in the juvenile-adult population model discussed in Section 4.1. 
The proof of generalised convergence for other concrete applications such as the ones we discussed in Sections 4.2-4.4 follows similar lines.

Let $\Psi_E$, where $E=(J,A)$, given by \eqref{jaop1}-\eqref{jaop3} and let $E_n
\rightarrow E$. We prove that $\Psi_{E_n}$ tends in the generalised
sense to  $\Psi_E$. To this end we utilise Theorem 2.3 (which is \cite[Theorem 2.25, Chap. IV]{Kato}), which characterizes convergence of operators in the generalised sense via convergence in norm of the resolvent operators. 
Indeed, let us assume that $\lambda$ is large
enough so that
\begin{equation}\label{resolv}
\int_l^m \frac{\beta(s,E)} {\gamma(s,E)}\exp\left\{-\int_0^s \frac{\lambda
+ \mu(z,E)}{\gamma(z,E)}\,d z\right\} \,d s < 1,
\end{equation}
and define, for a sequence $E_n$ with limit $E$ and any $f \in
L^1(0,m)$ with norm $1$, the following functions on $[0,m]:$
\begin{equation} 
\gamma_n(s)=\gamma(s,E_n),\quad \mu_n(s)=\mu(s,E_n),\quad h_n(s)=\frac{\beta(s,E_n)}{\gamma_n(s)},
\end{equation}
\begin{equation}   F_n(s)=e^{-\int_0^s\frac{\lambda + \mu_n(z)}{\gamma_n(z)} d z},\quad 
H_n(s)=\frac{F_n(s)}{\gamma_n(s)},
\end{equation}
and
\begin{equation}
G_n(s)=\int_0^s\frac{f(z)}{F_n(z)}\,d z,
\end{equation}
and the numerical sequence:
\begin{equation}
c_n=\frac{\int_l^m h_n(s)F_n(s)G_n(s)\,d s}{1 - \int_l^mh_n(s)F_n(s)\,d s}.
\end{equation}
Notice that $c_n$ is well defined for $\lambda$ large enough by
(\ref{resolv}). Similarly, but substituting $E_n$ by $E$, define
$\gamma(s),\,\mu(s),\,h(s),\,F(s),\,H(s)$,\,$G(s)$ and $c$. A
straightforward computation, based on the variation of constants
formula, shows that the resolvent operator of $\Psi_{E_n}$ is
explicitly given by: 
\begin{equation}
R_n(\lambda)f (s) = (c_n + G_n(s))H_n(s),
\end{equation}
with analogous expression for the resolvent of $\Psi_{E}.$ Alternatively, it is shown that $(c_n + G_n(\cdot))H_n(\cdot)\in D(\Psi_{E_n})$ and 
one can directly compute that
\begin{equation}
\partial_s(\gamma_n(s)H_n(s)) = \partial_s F_n(s) =-\frac{(\mu_n(s)+\lambda)}{\gamma_n(s)}F_n(s) = -
(\mu_n(s)+\lambda)H_n(s),
\end{equation}
and
\begin{equation}
\partial_s(\gamma_n(s)G_n(s)H_n(s)) = \partial_s (F_n(s)G_n(s)) =-(\mu_n(s)+\lambda)H_n(s)G_n(s) + f(s),
\end{equation}
which imply
\begin{align}
\partial_s (\gamma_n(s)(R_n(\lambda)f)(s)) &=\partial_s (c_n\gamma_n(s)H_n(s)+\gamma_n(s)G_n(s)H_n(s)) \nonumber \\
& =-(\mu_n(s)+\lambda)(c_n+G_n(s))H_n(s) + f(s) \nonumber \\
& =-(\mu_n(s)+\lambda)(R_n(\lambda)f)(s) + f(s),
\end{align}
i.e., $(\lambda - \Psi_{E_n})R_n(\lambda)f = f.$\\
Now it is easy to see that $c_n$ tends to $c$ and that $G_n
\rightarrow G$ in $L^1$, both uniformly with respect to $f$ of norm
1. For instance,
\begin{align}
 \|G_n - G\| & \leq \int_0^m \int_0^s |f(z)|\left|\frac{1}{F_n(z)} -
\frac{1}{F(z)}\right|\,d z\,d s \nonumber \\
& \leq m \int_0^m |f(z)|\left|\frac{1}{F_n(z)} -
\frac{1}{F(z)}\right|\,d z \nonumber \\ 
& \leq \sup_{z \in [0,m]}\left|\frac{1}{F_n(z)} -
\frac{1}{F(z)}\right| \rightarrow 0.
\end{align}
Finally, consider
\begin{align}
\|R_n(\lambda) - R(\lambda)\|\ & \leq \| (c_n + G_n)H_n - (c + G)H_n\| + \|(c + G)H_n - (c + G)H\| \nonumber \\
& \leq \sup_n \|H_n\|_\infty (|c_n-c| + \|G_n - G\|) + (c + \|G\|_\infty)\|H_n-H\| \nonumber \\
& \rightarrow 0
\end{align}
uniformly with respect to $f$ of norm 1, since $c$ and $G$ are both
bounded uniformly with respect to $f$ of norm 1. From this it
follows that $\Psi_{E_n}$ tends to $\Psi_E$ in the generalised sense, by Theorem \ref{genconv}.

\section*{Acknowledgements}
\`{A}. Calsina was partially supported by the research projects 2009SGR-345 and DGI MTM2011-27739-C04-02,
and by the Edinburgh Mathematical Society while visiting the University of Stirling.
J.~Z. Farkas was supported by a University of Stirling Research and Enterprise Support Grant and by the research project DGI MTM2011-27739-C04-02, while visiting the Universitat Aut\`{o}noma de Barcelona. 
We thank the referees for their valuable comments and suggestions.

\end{document}